\newtheorem{lemma}{Lemma}
\newtheorem{cor}{Corollary}
\newtheorem{thm}{Theorem}
\theoremstyle{definition}
\newtheorem{ex}{Example}
\def\ind{\mathbh{1}}
\begin{document}
\begin{frontmatter}

\title{Randomly stopped sums with consistently\\ varying distributions}

\author[]{\inits{E.}\fnm{Edita}\snm{Kizinevi\v{c}}}\email
{edita.kizinevic@mif.stud.vu.lt}

\author[]{\inits{J.}\fnm{Jonas}\snm{Sprindys}}\email
{jonas.sprindys@mif.stud.vu.lt}

\author[]{\inits{J.}\fnm{Jonas}\snm{\v Siaulys}\corref{cor1}}\email
{jonas.siaulys@mif.vu.lt}
\cortext[cor1]{Corresponding author.}

\address{Faculty of Mathematics and Informatics, Vilnius University,\\
Naugarduko 24, Vilnius LT-03225, Lithuania}
\markboth{E. Kizinevi\v{c} et al.}{Randomly stopped sums with consistently varying distributions}

\begin{abstract}
Let $\{\xi_1,\xi_2,\ldots\}$ be a sequence of independent random
variables, and $\eta$ be a counting random variable independent of this
sequence. We consider conditions for $\{\xi_1,\xi_2,\ldots\}$ and
$\eta$ under which the distribution function of the~random sum
$S_\eta=\xi_1+\xi_2+\cdots+\xi_\eta$ belongs to the class of
consistently varying distributions. In our consideration, the random
variables $\{\xi_1,\xi_2,\ldots\}$ are not necessarily identically
distributed.
\end{abstract}

\begin{keyword}
Heavy tail \sep
consistently varying tail \sep
randomly stopped sum \sep
inhomogeneous distributions \sep
convolution closure \sep
random convolution closure
\MSC[2010]
62E20 \sep 60E05 \sep60F10 \sep44A35
\end{keyword}
\received{13 May 2016}
%
%
\accepted{23 June 2016}
\publishedonline{4 July 2016}
\end{frontmatter}

\section{Introduction}\label{i}

Let $\{\xi_1,\xi_2,\ldots\}$ be a sequence of independent random
variables (r.v.s) with distribution functions (d.f.s) $\{F_{\xi
_1},F_{\xi_2},\ldots\}$, and let $\eta$ be a counting r.v., that~is,
an integer-valued, nonnegative, and nondegenerate at zero~r.v. In
addition, suppose that the r.v.\ $\eta$ and r.v.s $\{\xi_1,\xi_2,\ldots
\}$ are independent. Let
$S_0=0$, $S_n=\xi_1+\xi_2+\cdots+\xi_n$ for $n\in\mathbb{N}$, and let
\begin{align*}
S_\eta=\sum_{k=1}^{\eta}
\xi_k
\end{align*}
be the randomly stopped sum of r.v.s $\{\xi_1,\xi_2,\ldots\}$.

We are interested in conditions under which the d.f. of $S_\eta$,
\begin{equation}
\label{sum} F_{S_\eta}(x)=\mathbb{P}(S_\eta\leqslant x)=\sum
_{n=0}^{\infty
}\mathbb{P}(\eta=n)
\mathbb{P}(S_n \leqslant x),
\end{equation}
belongs to the class of consistently varying distributions.

Throughout this paper, $f(x) \mathop{=} o  ( g(x)  )$ means that
$\lim_{x\to\infty} {f(x)}/{g(x)}=0$, and $f(x) \sim g(x)$ means that
$\lim_{x\to\infty} {f(x)}/{g(x)}=1$ for two vanishing (at infinity)
functions $f$ and $g$. Also, we denote the support of a counting
r.v.~$\eta$ by
\[
{\rm supp}(\eta):= \bigl\{n\in\mathbb{N}_0:\mathbb{P}(\eta=n)>0
\bigr\}.
\]

Before discussing the properties of $F_{S_\eta}$, we recall the
definitions of some classes of heavy-tailed d.f.s, where $ \overline
{F}(x) = 1-F(x)$ for all real~$x$ and a d.f. $F$.

\begin{itemize}
\item\it
A d.f. $F$ is heavy-tailed $(F \in\mathcal{H })$ if for every fixed $\delta> 0$,
\[
\lim_{x\rightarrow\infty} \overline{F}(x){\rm e}^{\delta x} = \infty.
\]
\item\it
A d.f. $F$ is long-tailed $(F\in\mathcal{L})$ if for every $y$ \textup{(}equivalently, for some $y>0$\textup{)},
\[
{\overline{F}(x+y)}\mathop{\sim} {\overline{F}(x)}.
\]
\item\it
A d.f. $F$ has a dominatedly varying tail $(F\in\mathcal{D})$
if for every fixed $y\in(0,1)$ \textup{(}equivalently, for some $y\in(0,1)$\textup{)},
\[
\limsup_{x\rightarrow\infty}\frac{\overline{F}(xy)}{\overline
{F}(x)}<\infty.
\]
\item\it
A d.f. $F$ has a consistently varying tail $(F\in\mathcal{C})$ if
\[
\lim_{y\uparrow1}\limsup_{x\to\infty} \frac{\overline{F}(xy)}{\overline {F}(x)} =
1.
\]
\item\it
A d.f. $F$ has a regularly varying tail $(F\in\mathcal{R})$ if
\[
\lim_{x\rightarrow\infty}\frac{\overline{F}(xy)}{\overline
{F}(x)} = y^{-\alpha}
\]
for some $\alpha\geqslant0$ and any fixed $y > 0$.
\item\it
A d.f. $F$ supported on the interval $[0,\infty)$ is subexponential $(F\in\mathcal{S})$ if
\begin{equation}
\label{tr}\lim_{x\rightarrow\infty}\frac{\overline{F*F}(x)}{\overline{F}(x)} = 2.
\end{equation}
If a d.f. $G$ is supported on $\mathbb{R}$, then we suppose that
$G$ is subexpo\-nential $(G\in\mathcal{S})$ if the d.f.
$F(x)=G(x)\ind_{[0,\infty)}(x)$ satisfies relation \eqref{tr}.
\end{itemize}

It is known (see, e.g., \cite{chist,eo-1984,k-1988},
and Chapters 1.4 and A3 in \cite{EKM}) that these classes satisfy the
following inclusions:
\[
\mathcal{R} \subset\mathcal{C} \subset\mathcal{L} \cap\mathcal{D} \subset
\mathcal{S} \subset\mathcal{L} \subset\mathcal{H},\qquad \mathcal {D}\subset
\mathcal{H}.
\]
These inclusions are depicted in Fig.~\ref{f1} with the class $\mathcal
{C}$ highlighted.
\begin{figure}[t]
\includegraphics{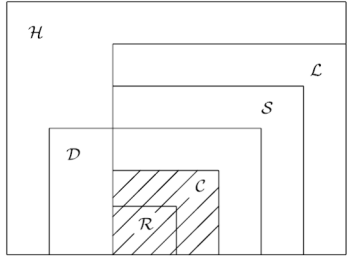}
\caption{Classes of heavy-tailed distributions.}
\label{f1}
\end{figure}

There exist many results on sufficient or necessary and sufficient
conditions in order that the d.f. of the randomly stopped sum \eqref
{sum} belongs to some heavy-tailed distribution class. Here we present
a few known results concerning the belonging of the d.f. $F_{{S}_\eta}$
to some class. The first result on subexponential distributions was proved
by Embrechts and Goldie (see Theorem 4.2 in \cite{em+gol}) and Cline
(see Theorem 2.13 in~\cite{cline}).
\begin{thm}\label{tt1}
Let $\{\xi_1,\xi_2,\ldots\}$ be independent copies of a nonnegative
r.v.\ $\xi$ with subexponential d.f. $F_\xi$. Let $\eta$ be a counting
r.v. independent of $\{\xi_1,\xi_2,\ldots\}$. If $\mathbb
{E}(1+\delta)^\eta<\infty$ for some $\delta>0$, then the d.f.
$F_{{S}_{\eta}}\in\mathcal{S}$.
\end{thm}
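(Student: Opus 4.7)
The plan is to prove the stronger tail-equivalence
\[
\overline{F_{S_\eta}}(x) \sim \mathbb{E}\eta \cdot \overline{F_\xi}(x), \qquad x\to\infty,
\]
and then to invoke the fact that $\mathcal{S}$ is closed under tail-equivalence (if $F\in\mathcal{S}$ and $\overline{G}(x)\sim c\,\overline{F}(x)$ for some $c>0$, then $G\in\mathcal{S}$) to conclude that $F_{S_\eta}\in\mathcal{S}$. Using formula \eqref{sum}, I would rewrite
\[
\frac{\overline{F_{S_\eta}}(x)}{\overline{F_\xi}(x)} \,=\, \sum_{n=0}^{\infty} \mathbb{P}(\eta=n)\,\frac{\overline{F_\xi^{*n}}(x)}{\overline{F_\xi}(x)},
\]
and observe that by subexponentiality of $F_\xi$ each summand converges to $n\,\mathbb{P}(\eta=n)$ as $x\to\infty$. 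The target limit $\mathbb{E}\eta$ is finite by the hypothesis $\mathbb{E}(1+\delta)^\eta<\infty$, and positive because $\eta$ is nondegenerate at zero.

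The decisive analytic input is \emph{Kesten's inequality}: for $F_\xi\in\mathcal{S}$ and any $\varepsilon>0$, there exists a constant $K=K(\varepsilon)<\infty$ such that
\[
\overline{F_\xi^{*n}}(x) \leqslant K(1+\varepsilon)^n\,\overline{F_\xi}(x)
\]
for all $x\geqslant0$ and all $n\in\mathbb{N}$. I would choose $\varepsilon\in(0,\delta]$ so that $\mathbb{E}(1+\varepsilon)^\eta<\infty$, so that the summands above are uniformly dominated in $x$ by the summable sequence $K(1+\varepsilon)^n\mathbb{P}(\eta=n)$. Dominated convergence then interchanges the limit with the sum and yields the claimed tail-equivalence with constant $\mathbb{E}\eta$.

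The main obstacle is exactly this interchange of the $x$-limit with the infinite summation over $n$: termwise passage to the limit is useless without an $x$-uniform bound on the ratios $\overline{F_\xi^{*n}}(x)/\overline{F_\xi}(x)$, whose growth in $n$ can a priori be arbitrary. Kesten's inequality is the standard device providing such a bound of geometric rate $(1+\varepsilon)^n$, and the assumption $\mathbb{E}(1+\delta)^\eta<\infty$ is precisely tailored to neutralise this geometric factor. Once tail-equivalence to the subexponential $F_\xi$ is in hand, the closure property of $\mathcal{S}$ under relations of the form $\overline{G}\sim c\,\overline{F}$ with $c\in(0,\infty)$ finishes the argument.
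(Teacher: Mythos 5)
Your proposal is correct. Note that the paper does not prove Theorem~\ref{tt1} at all --- it is quoted as a known result of Embrechts--Goldie and Cline --- so there is no internal proof to compare against; your argument (termwise limit $\overline{F_\xi^{*n}}(x)/\overline{F_\xi}(x)\to n$ from subexponentiality, Kesten's bound $\overline{F_\xi^{*n}}(x)\leqslant K(1+\varepsilon)^n\overline{F_\xi}(x)$ with $\varepsilon\leqslant\delta$ to justify dominated convergence, hence $\overline{F_{S_\eta}}(x)\sim\mathbb{E}\eta\,\overline{F_\xi}(x)$, and closure of $\mathcal{S}$ under tail equivalence) is exactly the classical proof of this statement and all of its ingredients are standard and correctly deployed.
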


Similar results for the class $\mathcal{D}$ can be found in Leipus and
\v{S}iaulys \cite{ls-2012}. We present the statement of Theorem 5 from
this work.
\begin{thm}\label{tt2} Let $\{\xi_1,\xi_2,\ldots\}$ be i.i.d.
nonnegative r.v.s with common d.f. $F_\xi\in\mathcal{D}$ and finite
mean $\mathbb{E}\xi$. Let $\eta$ be a counting r.v. independent of $\{
\xi_1,\xi_2,\ldots\}$ with d.f. $F_\eta$ and finite mean $\mathbb
{E}\eta$. Then d.f. $F_{{S}_\eta}\in\mathcal{D}$ iff \, $\min\{F_\xi,
F_\eta\}\in\mathcal{D}$.
\end{thm}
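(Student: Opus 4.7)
The plan is to prove the equivalence by first establishing a two-sided tail estimate of the form
\[
c_1\bigl(\overline{F_\xi}(x)+\overline{F_\eta}(\lambda_1 x)\bigr)\leqslant\overline{F_{S_\eta}}(x)\leqslant c_2\bigl(\overline{F_\xi}(x)+\overline{F_\eta}(\lambda_2 x)\bigr)
\]
valid for all sufficiently large $x$ and suitable positive constants $\lambda_1,\lambda_2,c_1,c_2$. Once such a sandwich is in hand, the equivalence $F_{S_\eta}\in\mathcal{D}\iff\min\{F_\xi,F_\eta\}\in\mathcal{D}$ reduces to elementary bookkeeping: horizontal scaling of the argument does not affect membership in $\mathcal{D}$, so one may replace $\overline{F_\eta}(\lambda_i x)$ by $\overline{F_\eta}(x)$, and $\overline{\min\{F_\xi,F_\eta\}}(x)=\max\{\overline{F_\xi}(x),\overline{F_\eta}(x)\}$ differs from $\overline{F_\xi}(x)+\overline{F_\eta}(x)$ by at most a factor of~$2$.

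For the lower bound I would combine two contributions and take the larger. Keeping only the $n=1$ term in \eqref{sum} yields $\overline{F_{S_\eta}}(x)\geqslant\mathbb{P}(\eta=1)\overline{F_\xi}(x)$. Fixing $\lambda>1/\mathbb{E}\xi$ and setting $n(x)=\lfloor\lambda x\rfloor$, the weak law of large numbers forces $\mathbb{P}(S_{n(x)}>x)\to 1$, since $x/n(x)\to 1/\lambda<\mathbb{E}\xi$; independence then gives $\overline{F_{S_\eta}}(x)\geqslant\mathbb{P}(\eta>n(x))\,\mathbb{P}(S_{n(x)}>x)\geqslant\tfrac{1}{2}\overline{F_\eta}(\lambda x)$ for all sufficiently large $x$. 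For the upper bound I would fix $\delta$ with $\delta\mathbb{E}\xi<1$ and split
\[
\overline{F_{S_\eta}}(x)=\sum_{n\leqslant\delta x}\mathbb{P}(\eta=n)\overline{F_\xi^{*n}}(x)+\sum_{n>\delta x}\mathbb{P}(\eta=n)\overline{F_\xi^{*n}}(x).
\]
The second sum is bounded by $\overline{F_\eta}(\delta x)$, and for the first sum I would appeal to the uniform ``large-deviation'' bound available in the $\mathcal{D}$-class: under $F_\xi\in\mathcal{D}$ with $\mathbb{E}\xi<\infty$ there exists $C=C(\delta)$ such that $\overline{F_\xi^{*n}}(x)\leqslant Cn\,\overline{F_\xi}(x)$ whenever $x\geqslant n/\delta$. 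Summing against $\mathbb{P}(\eta=n)$ and using $\mathbb{E}\eta<\infty$ then produces an upper estimate of the desired form with a factor $C\,\mathbb{E}\eta$.

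The main obstacle is the uniform large-deviation estimate $\overline{F_\xi^{*n}}(x)\leqslant Cn\,\overline{F_\xi}(x)$ on the regime $n\leqslant\delta x$. This is a dominated-variation analogue of Kesten's lemma; while standard, its justification is not a one-line argument and combines dominated variation (which furnishes $\overline{F_\xi}(x/2)\leqslant K\,\overline{F_\xi}(x)$) with a truncation of the summands at level $x/2$ and a Chebyshev or Fuk--Nagaev estimate for the centred part that exploits finiteness of~$\mathbb{E}\xi$. I would either quote such a result directly from the literature (e.g.\ Tang--Tsitsiashvili-type inequalities or the companion paper \cite{ls-2012}) or reprove it from these ingredients; everything else in the argument is then routine manipulation of the sandwich.
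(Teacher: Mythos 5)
First, a point of orientation: the paper does not prove this statement. Theorem~\ref{tt2} is quoted verbatim as Theorem~5 of Leipus and \v{S}iaulys \cite{ls-2012} and is used only as background, so there is no in-paper proof to compare against; your proposal has to be judged on its own and against the source. On those terms it is essentially the right argument, and it is the standard route to results of this type: a two-sided sandwich $c_1(\overline{F}_\xi(x)+\overline{F}_\eta(\lambda_1 x))\leqslant\overline{F}_{S_\eta}(x)\leqslant c_2(\overline{F}_\xi(x)+\overline{F}_\eta(\lambda_2 x))$, with the lower bound from a single-summand term plus the weak law, and the upper bound from a Kesten-type uniform estimate $\overline{F_\xi^{*n}}(x)\leqslant Cn\overline{F}_\xi(x)$ on the regime $n\leqslant\delta x$. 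That key estimate is exactly what the present paper imports as Lemma~\ref{l6} (from \cite{ds-2016}; apply it to the centred variables $\xi_i-\mathbb{E}\xi$ and absorb the shift $n\mathbb{E}\xi\leqslant\delta\mathbb{E}\xi\,x$ using $F_\xi\in\mathcal{D}$), so quoting it is legitimate and your identification of it as the only nontrivial ingredient is accurate.

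Two small repairs are needed. First, $\mathbb{P}(\eta=1)$ may vanish; take instead the smallest $a$ with $\mathbb{P}(\eta=a)>0$ and use $\overline{F}_{S_\eta}(x)\geqslant\mathbb{P}(\eta=a)\overline{F}_\xi(x)$, which is where nonnegativity of the $\xi_i$ enters. Second, and more substantively, the closing ``bookkeeping'' is stated too casually: you cannot literally ``replace $\overline{F}_\eta(\lambda_i x)$ by $\overline{F}_\eta(x)$'' up to constants, because $F_\eta$ on its own need not belong to $\mathcal{D}$ (its tail may be negligible against $\overline{F}_\xi$ and, say, exponential, in which case $\overline{F}_\eta(\lambda_1 x)/\overline{F}_\eta(\lambda_2 x)$ is unbounded). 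The sandwich is nevertheless sufficient: writing $H(x)=\max\{\overline{F}_\xi(x),\overline{F}_\eta(x)\}$ for the tail of $\min\{F_\xi,F_\eta\}$, the two bounds give $\overline{F}_{S_\eta}(x)\leqslant 2c_2H(\alpha x)$ and $\overline{F}_{S_\eta}(x)\geqslant c_1H(\beta x)$ for suitable fixed $\alpha,\beta>0$, and one then compares $H(x/2)$ with $H(x)$ (or $\overline{F}_{S_\eta}(x/2)$ with $\overline{F}_{S_\eta}(x)$) through $\overline{F}_{S_\eta}$ (respectively $H$) evaluated at two points whose ratio is a fixed constant less than one; membership in $\mathcal{D}$ of the relevant monotone function then closes both directions of the equivalence. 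With these adjustments the proof is complete.
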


We recall only that the d.f. $F$ belongs to the class $\mathcal{D}$ if
and only if the upper Matuszewska index $J^{+}_{F} < \infty$, where, by
definition,
\[
J_F^+=-\lim_{y\rightarrow\infty}\frac{1}{\log y} \log \biggl(
\liminf_{x\rightarrow\infty}\frac{\overline{F}(xy)}{\overline{F}(x)} \biggr).
\]

The random convolution closure for the class $\mathcal{L}$ was
considered, for instance, in \cite{albin,ls-2012,wat,xu}. We now present a particular statement of Theorem~1.1
from \cite{xu}.
\begin{thm}\label{tt3} Let $\{\xi_1,\xi_2,\ldots\}$ be
independent r.v.s, and $\eta$ be a counting r.v. independent of $\{\xi
_1,\xi_2,\ldots\}$ with d.f. $F_\eta$. Then the d.f.\ $F_{{S}_\eta}\in
\mathcal{L}$ if the following five conditions are satisfied:
\begin{enumerate}
\item[\rm(i)] $\mathbb{P}(\eta\geqslant\kappa)>0$ for some
    $\kappa\in \mathbb{N}$;
\item[\rm(ii)] for all $k\geqslant\kappa$, the d.f. $F_{{S}_k}$ of
    the sum ${S}_k$ is long tailed;
\item[\rm(iii)] $\displaystyle\sup_{k\geqslant1}\sup_{x\in\mathbb
    {R}}  (F_{{S}_k}(x)-F_{{S}_k}(x-1)  )\sqrt{k}<\infty$;
\item[\rm(iv)] $\displaystyle\limsup_{z\rightarrow\infty}\,\sup_{k\geqslant\kappa}\,\sup_{x\geqslant k(z-1)+z}
    \frac{\overline {F}_{{S}_k}(x-1)}{\overline{F}_{{S}_k}(x)}=1$;
\item[\rm(v)] $\overline{F}_\eta(ax)=o  (\sqrt{x}\, \overline
    {F}_{{S}_\kappa}(x)  )$ for each $a>0$.
\end{enumerate}
\end{thm}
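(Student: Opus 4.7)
The goal is to verify $\overline{F}_{S_\eta}(x-1) \sim \overline{F}_{S_\eta}(x)$ as $x \to \infty$; the case $y = 1$ suffices by the characterization of $\mathcal{L}$. Writing $p_k = \mathbb{P}(\eta = k)$ and $\Delta_k(x) := \mathbb{P}(S_k \in (x-1, x]) = \overline{F}_{S_k}(x-1) - \overline{F}_{S_k}(x)$, the representation \eqref{sum} gives
\[
\overline{F}_{S_\eta}(x-1) - \overline{F}_{S_\eta}(x) = \sum_{k \geqslant 1} p_k \Delta_k(x),
\]
so the goal reduces to proving $\sum_{k \geqslant 1} p_k \Delta_k(x) = o(\overline{F}_{S_\eta}(x))$. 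Condition~(i) supplies some $k_0 \geqslant \kappa$ with $p_{k_0} > 0$; by (ii) then $F_{S_{k_0}} \in \mathcal{L}$, and a shift-and-compare argument based on the independent increment $S_{k_0} - S_\kappa$ (choose $M$ with $\mathbb{P}(S_{k_0} - S_\kappa \geqslant -M) > 0$) combined with the long-tailedness of $F_{S_\kappa}$ yields $\overline{F}_{S_\eta}(x) \geqslant c\, \overline{F}_{S_\kappa}(x)$ for large $x$ and some $c > 0$, so the denominator can be replaced by $\overline{F}_{S_\kappa}(x)$.

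\textbf{Three-range decomposition.} Fix a large parameter $z > 1$ and set $N = N_z(x) := \lfloor x/z \rfloor$. Split
\[
\sum_{k \geqslant 1} p_k \Delta_k(x) = \underbrace{\sum_{k=1}^{\kappa-1} p_k \Delta_k(x)}_{=: I(x)} + \underbrace{\sum_{k=\kappa}^{N} p_k \Delta_k(x)}_{=: II(x)} + \underbrace{\sum_{k > N} p_k \Delta_k(x)}_{=: III(x)}.
\]
For $II(x)$: whenever $\kappa \leqslant k \leqslant N$ and $x \geqslant z^2$, the inequality $x \geqslant k(z-1) + z$ holds, so (iv) gives $\Delta_k(x)/\overline{F}_{S_k}(x) \leqslant \varepsilon(z)$ uniformly in $k$ with $\varepsilon(z) \to 0$ as $z \to \infty$; hence $II(x) \leqslant \varepsilon(z) \overline{F}_{S_\eta}(x)$. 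For $III(x)$: (iii) yields $\Delta_k(x) \leqslant C/\sqrt{k}$, so $III(x) \leqslant C\, \overline{F}_\eta(N)/\sqrt{N}$; applying (v) with $a = 1/z$ gives $\overline{F}_\eta(x/z) = o(\sqrt{x}\, \overline{F}_{S_\kappa}(x))$, whence $III(x) = o(\sqrt{z}\, \overline{F}_{S_\kappa}(x)) = o(\overline{F}_{S_\eta}(x))$ for fixed $z$.

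\textbf{The finite term $I(x)$.} This is the delicate piece: neither (ii) nor (iv) controls $F_{S_k}$ for $k < \kappa$, and (iii) alone gives only the non-vanishing constant bound $\Delta_k(x) \leqslant C/\sqrt{k}$. I use the convolution identity $F_{S_\kappa} = F_{S_k} * F_{Z_k}$ with $Z_k = \xi_{k+1} + \cdots + \xi_\kappa$ independent of $S_k$: picking $M$ with $\mathbb{P}(Z_k \geqslant -M) > 0$ yields $\overline{F}_{S_k}(x + M) \leqslant C\, \overline{F}_{S_\kappa}(x)$, and the long-tail of $F_{S_\kappa}$ then gives $\overline{F}_{S_k}(x) = O(\overline{F}_{S_\kappa}(x))$. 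Sharpening this to $\Delta_k(x) = o(\overline{F}_{S_\kappa}(x))$ proceeds by exploiting the identity $\Delta_\kappa(x) = \mathbb{E}[\Delta_k(x - Z_k)]$ together with $\Delta_\kappa(x) = o(\overline{F}_{S_\kappa}(x))$ (from $F_{S_\kappa} \in \mathcal{L}$) and a truncation of $Z_k$. In the common special case $\kappa = \min\mathrm{supp}(\eta)$, the sum $I(x)$ is empty and this step is trivial.

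\textbf{Main obstacle.} Combining the three ranges yields $\sum_k p_k \Delta_k(x) \leqslant (\varepsilon(z) + o_z(1))\, \overline{F}_{S_\eta}(x)$; sending $x \to \infty$ then $z \to \infty$ completes the proof. The main technical difficulty is the finite term $I(x)$: conditions (ii)--(iv) do not directly apply to $k < \kappa$, and a careful convolution-based transfer of long-tail regularity from $F_{S_\kappa}$ back to the small partial sums is required. The remaining two ranges combine cleanly once (iv) is applied uniformly---enabled by the choice of $N$ of order $x/z$---and (v) is used with the matching $a = 1/z$.
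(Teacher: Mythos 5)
This theorem is not proved in the paper at all: it is quoted verbatim as Theorem~1.1 of Xu, Foss and Wang \cite{xu} and used only as background, so there is no in-paper proof to compare against. Judged on its own terms, your reconstruction is essentially the right argument and matches the standard proof strategy for such random-convolution closure results: reduce to $\overline{F}_{S_\eta}(x-1)-\overline{F}_{S_\eta}(x)=\sum_k p_k\Delta_k(x)=o(\overline{F}_{S_\eta}(x))$, split at $k<\kappa$, $\kappa\leqslant k\leqslant\lfloor x/z\rfloor$, and $k>\lfloor x/z\rfloor$, use (iv) uniformly on the middle range (your verification that $k\leqslant x/z$ and $x\geqslant z^2$ imply $x\geqslant k(z-1)+z$ is correct), use (iii) and (v) with $a$ slightly below $1/z$ on the tail range, and lower-bound the denominator by $c\,\overline{F}_{S_\kappa}(x)$ via condition (i), an independent-increment shift, and the long-tailedness of $F_{S_\kappa}$. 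The order of limits ($x\to\infty$ first, then $z\to\infty$) is handled correctly, and the monotonicity in $z$ of the supremum in (iv) makes $\varepsilon(z)\to 0$ legitimate.

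The one step you leave genuinely under-specified is the finite block $I(x)$. From $\Delta_\kappa(x)=\mathbb{E}[\Delta_k(x-Z_k)]=o(\overline{F}_{S_\kappa}(x))$ you cannot conclude $\Delta_k(x)=o(\overline{F}_{S_\kappa}(x))$ by ``a truncation of $Z_k$'' alone: restricting $Z_k$ to a compact set still leaves an average of $\Delta_k$ over sliding unit intervals whose common intersection is empty, so no single value $\Delta_k(x-t_0)$ is dominated. The standard repair is to localize $Z_k$ to an interval of length strictly less than one: pick $a$ with $q:=\mathbb{P}(Z_k\in[a,a+\tfrac12))>0$; then for every $t\in[a,a+\tfrac12)$ one has $(x-t-1,x-t]\supseteq(x-a-1,x-a-\tfrac12]$, whence
\begin{equation*}
\mathbb{P}\bigl(S_k\in(x-a-1,x-a-\tfrac12]\bigr)\leqslant q^{-1}\Delta_\kappa(x),
\end{equation*}
and writing $\Delta_k(y)$ as the sum of two such half-unit probabilities gives $\Delta_k(y)\leqslant q^{-1}(\Delta_\kappa(y+a)+\Delta_\kappa(y+a+\tfrac12))=o(\overline{F}_{S_\kappa}(y))$, using $F_{S_\kappa}\in\mathcal{L}$ to absorb the constant shifts. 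With this (minor, standard) lemma supplied, your proof is complete.
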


We observe that the case of identically distributed r.v.s is considered
in Theorems~\ref{tt1} and \ref{tt2}. In Theorem \ref{tt3}, r.v.s $\{\xi
_1,\xi_2,\ldots\}$ are independent but not necessarily identically
distributed. A similar result for r.v.s having d.f.s with dominatedly
varying tails can be found in \cite{ds-2016}.
\begin{thm}[\cite{ds-2016}, Theorem 2.1]\label{tt4}
Let r.v.s $\{\xi_1,\xi_2,\ldots\}$ be nonnegative independent, not
necessarily identically distributed, and $\eta$ be a counting r.v.
independent of $\{\xi_1,\break\xi_2,\ldots\}$. Then the d.f
$F_{S_\eta}$ belongs to the class $\mathcal{D}$ if the following three
conditions are satisfied:
\begin{enumerate}
\item[\rm(i)] $F_{\xi_\kappa}\in\mathcal{D}$ for some $\kappa
    \in{\rm supp}(\eta)$,
\item[\rm(ii)] $\displaystyle \limsup_{x\rightarrow\infty} \sup_{n\geqslant\kappa}\frac{1}{n\overline{F}_{\xi_\kappa}(x)}\sum_{i=1}^{n}\overline{F}_{\xi_i}(x)<\infty$,
\item[\rm(iii)] $\mathbb{E}\eta^{p+1}<\infty$ for some $
    p>J_{F_{\xi_\kappa}}^+$.
\end{enumerate}
\end{thm}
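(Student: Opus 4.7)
The plan is to sandwich $\overline{F_{S_\eta}}(x)$ between two positive-constant multiples of $\overline{F_{\xi_\kappa}}(x)$; once this is achieved, $F_{S_\eta}\in\mathcal{D}$ is inherited from $F_{\xi_\kappa}\in\mathcal{D}$, because for any fixed $y\in(0,1)$ the ratio $\overline{F_{S_\eta}}(xy)/\overline{F_{S_\eta}}(x)$ is then dominated by a constant times $\overline{F_{\xi_\kappa}}(xy)/\overline{F_{\xi_\kappa}}(x)$, whose $\limsup$ as $x\to\infty$ is finite. The role of condition~(iii) is to supply a $p>J^+_{F_{\xi_\kappa}}$ with $\mathbb{E}\eta^{p+1}<\infty$: this is exactly the moment that will absorb the polynomial-in-$n$ growth produced by the Potter-type bounds for $F_{\xi_\kappa}\in\mathcal{D}$.

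For the upper bound, I would start from the nonnegativity-based inclusion $\{S_n>x\}\subseteq\bigcup_{i=1}^n\{\xi_i>x/n\}$, which gives $\overline{F_{S_n}}(x)\leqslant\sum_{i=1}^n\overline{F_{\xi_i}}(x/n)$. Condition~(ii) controls the right-hand side by $C\,n\,\overline{F_{\xi_\kappa}}(x/n)$ for $n\geqslant\kappa$ and $x$ large; the finitely many $n<\kappa$ are treated similarly after noting that (ii) at $n=\kappa$ forces every $\overline{F_{\xi_i}}$ with $i\leqslant\kappa$ to be dominated by a multiple of $\overline{F_{\xi_\kappa}}$. A Potter-type bound for the class $\mathcal{D}$ then gives $\overline{F_{\xi_\kappa}}(x/n)\leqslant C'n^{p}\overline{F_{\xi_\kappa}}(x)$ in the range $n\leqslant x/x_0$, so that $\overline{F_{S_n}}(x)\leqslant K\,n^{p+1}\overline{F_{\xi_\kappa}}(x)$ there; summing against $\mathbb{P}(\eta=n)$ and using (iii) dominates this contribution by $K\,\mathbb{E}\eta^{p+1}\,\overline{F_{\xi_\kappa}}(x)$. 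The complementary range $n>x/x_0$ is handled by a Markov estimate $\sum_{n>x/x_0}\mathbb{P}(\eta=n)\,n\leqslant(x_0/x)^{p}\mathbb{E}\eta^{p+1}$ combined with the companion Potter lower bound $\overline{F_{\xi_\kappa}}(x)\geqslant c\,x^{-p}$, which converts the $x^{-p}$ factor into a multiple of $\overline{F_{\xi_\kappa}}(x)$.

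For the matching lower bound I would simply use nonnegativity: $S_\kappa\geqslant\xi_\kappa$ implies $\overline{F_{S_\kappa}}(x)\geqslant\overline{F_{\xi_\kappa}}(x)$, and retaining only the $n=\kappa$ term in~\eqref{sum} produces $\overline{F_{S_\eta}}(x)\geqslant\mathbb{P}(\eta=\kappa)\,\overline{F_{\xi_\kappa}}(x)$, which is a genuine positive multiple because $\kappa\in\mathrm{supp}(\eta)$. The main obstacle I expect is the upper bound: Potter's estimate on $\overline{F_{\xi_\kappa}}(x/n)$ degenerates once $n\gtrsim x$, and it is precisely the matching of the polynomial rate $n^{p+1}$ arising from (i) and (ii) with the integrability $\mathbb{E}\eta^{p+1}<\infty$ in~(iii) that allows one to close the estimate through the two complementary Potter inequalities and Markov's inequality. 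A secondary technical point is that the distributions $F_{\xi_1},\ldots,F_{\xi_{\kappa-1}}$ are not themselves assumed to belong to $\mathcal{D}$, so their tails must be routed through $F_{\xi_\kappa}$ as indicated above rather than handled directly.
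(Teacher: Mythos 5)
Your proposal is correct, and it follows essentially the route the paper relies on: the theorem is imported from \cite{ds-2016} without a written proof here, but its engine is exactly Lemma~\ref{l5}, whose uniform bound $\overline{F}_{S_n}(x)\leqslant c_1 n^{p+1}\overline{F}_{\xi_\kappa}(x)$ you re-derive from the union bound and the Potter-type estimates for $\mathcal{D}$, handling the range $n>x/x_0$ by Markov's inequality instead of asserting the bound for all $x\geqslant 0$. The remaining steps --- the lower bound $\overline{F}_{S_\eta}(x)\geqslant\mathbb{P}(\eta=\kappa)\overline{F}_{\xi_\kappa}(x)$ from nonnegativity, absorbing $n^{p+1}$ by condition (iii), and routing the tails of $F_{\xi_1},\ldots,F_{\xi_{\kappa-1}}$ through condition (ii) at $n=\kappa$ --- are exactly the standard completion of this sandwich argument.
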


In this work, we consider randomly stopped sums of independent and not
necessarily identically distributed r.v.s. As noted before, we restrict
ourselves on the class $\mathcal{C}$.
If r.v.s $\{\xi_1,\xi_2,\ldots\}$ are not identically distributed,
then different collections of conditions on $\{\xi_1,\xi_2,\ldots\}$
and $\eta$ imply that $F_{S_\eta}\in\mathcal{C}$. We suppose that some
r.v.s from $\{\xi_1,\xi_2, \ldots\}$ have distributions belonging to
the class $ \mathcal{C}$, and we find minimal conditions on $\{\xi_1,\xi
_2, \ldots\}$ and $\eta$ for the distribution of the randomly stopped
sum $S_\eta$ to remain in the same class. It should be noted that we
use the methods developed in \cite{ds-2016} and
\cite{dss-2016}.

The rest of the paper is organized as follows. In Section~\ref{main},
we present our main results together with two examples of randomly
stopped sums $S_\eta$ with d.f.s having consistently varying tails.
Section~\ref{lemma} is a collection of auxiliary lemmas, and the proofs
of the main results are presented in Section~\ref{proof}.

\section{Main results}\label{main}

In this section, we present three statements in which we describe the
belonging of  a randomly stopped sum to the class $\mathcal{C}$. In the
conditions of Theorem \ref{th1}, the counting r.v. $\eta$ has a finite
support. Theorem \ref{th2} describes the situation where no moment
conditions on the r.v.s $\{\xi_1,\xi_2,\ldots\}$ are required, but
there is strict requirement for $\eta$. Theorem \ref{th3} deals with
the opposite case: the r.v.s $\{\xi_1,\xi_2,\ldots\}$ should have
finite means, whereas the requirement for $\eta$ is weaker. It should
be noted that the case of real-valued r.v.s $\{\xi_1,\xi_2,\ldots\}$ is
considered in Theorems \ref{th1} and \ref{th2}, whereas Theorem \ref
{th3} deals with nonnegative r.v.s.

\begin{thm}\label{th1}
Let $\{\xi_1,\xi_2,\ldots,\xi_D\}$, $D \in\mathbb{N}$, be independent
real-valued r.v.s, and $\eta$ be a counting r.v. independent of
$\{\xi_1,\xi_2,\ldots,\xi_D\}$. Then the d.f. $F_{S_\eta }$ belongs to
the class $\mathcal{C}$ if the following conditions are satisfied:
\begin{enumerate}[label={\em(\alph*)}]
\item $\mathbb{P} (\eta\leqslant D ) = 1$,
\item $F_{\xi_{1}} \in\mathcal{C}$,
\item for each $k=2,\ldots,D$, either $F_{\xi_{k}}\in\mathcal{C}$
    or $\overline{F}_{\xi_{k}}(x) \mathop{=} o  (
    \overline{F}_{\xi_{1}}(x)  )$.
\end{enumerate}
\end{thm}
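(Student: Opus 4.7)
The plan is to prove $F_{S_\eta} \in \mathcal{C}$ in two stages: first, by induction on $n$, show that $F_{S_n} = F_{\xi_1} * \cdots * F_{\xi_n}$ lies in $\mathcal{C}$ for every $n \in \{1, 2, \ldots, D\}$; second, use hypothesis~(a) to express $F_{S_\eta}$ as a finite convex combination of the $F_{S_n}$ and reduce the $\mathcal{C}$-property of the mixture to that of its finitely many components.

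For the induction, I anticipate the auxiliary Section~\ref{lemma} to supply two convolution facts for~$\mathcal{C}$: \textbf{(i)} if $F, G \in \mathcal{C}$, then $F * G \in \mathcal{C}$; and \textbf{(ii)} if $F \in \mathcal{C}$ and $G$ is a d.f.\ with $\overline{G}(x) = o(\overline{F}(x))$, then $F * G \in \mathcal{C}$ with $\overline{F*G}(x) \sim \overline{F}(x)$. The base case $n = 1$ is exactly hypothesis~(b). For the inductive step from $n-1$ to $n$, decompose $F_{S_n} = F_{S_{n-1}} * F_{\xi_n}$. If $F_{\xi_n} \in \mathcal{C}$, apply~(i). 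Otherwise $\overline{F}_{\xi_n}(x) = o(\overline{F}_{\xi_1}(x))$ by~(c); to invoke~(ii) I must upgrade this to $\overline{F}_{\xi_n}(x) = o(\overline{F}_{S_{n-1}}(x))$. Using independence and $F_{\xi_1} \in \mathcal{C} \subset \mathcal{L}$, for any fixed $M > 0$,
\begin{equation*}
\overline{F}_{S_{n-1}}(x) \;\geq\; \overline{F}_{\xi_1}(x + M)\, \mathbb{P}\bigl(\xi_2 + \cdots + \xi_{n-1} > -M\bigr);
\end{equation*}
letting $x \to \infty$ (which by long-tailedness sends $\overline{F}_{\xi_1}(x+M)/\overline{F}_{\xi_1}(x)$ to $1$) and then $M \to \infty$ gives $\liminf_{x\to\infty} \overline{F}_{S_{n-1}}(x) / \overline{F}_{\xi_1}(x) \geq 1$, yielding the required upgrade.

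For the second stage, hypothesis~(a) together with $\overline{F}_{S_0}(x) = 0$ for $x \geq 0$ reduce \eqref{sum} to
\begin{equation*}
\overline{F}_{S_\eta}(x) \;=\; \sum_{n=1}^{D} \mathbb{P}(\eta = n)\, \overline{F}_{S_n}(x), \qquad x \geq 0.
\end{equation*}
For any $y \in (0,1)$, the elementary bound $\sum_n a_n / \sum_n b_n \leq \max_n a_n/b_n$ (valid for positive $b_n$) applied to $a_n = \mathbb{P}(\eta = n)\overline{F}_{S_n}(xy)$ and $b_n = \mathbb{P}(\eta = n)\overline{F}_{S_n}(x)$ gives
\begin{equation*}
\frac{\overline{F}_{S_\eta}(xy)}{\overline{F}_{S_\eta}(x)} \;\leq\; \max_{\substack{1 \leq n \leq D \\ \mathbb{P}(\eta = n) > 0}} \frac{\overline{F}_{S_n}(xy)}{\overline{F}_{S_n}(x)}.
\end{equation*}
Taking $\limsup_{x\to\infty}$ and then $y \uparrow 1$, the finiteness of the index set combined with $F_{S_n} \in \mathcal{C}$ for each such $n$ forces the right-hand side to~$1$, while the left-hand side is always~$\geq 1$, proving $F_{S_\eta} \in \mathcal{C}$.

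The main obstacle is the inductive step in the case $F_{\xi_n} \notin \mathcal{C}$: one must transfer the smallness of $\overline{F}_{\xi_n}$ from a comparison with $\overline{F}_{\xi_1}$ to a comparison with the \emph{convolution} tail $\overline{F}_{S_{n-1}}$, and this rests on the long-tailedness inherited from $F_{\xi_1} \in \mathcal{C}$. The finite-mixture step is essentially mechanical once the induction is complete, since $D < \infty$ prevents any pathological cancellation in the supremum over mixture components.
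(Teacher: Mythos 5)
Your proof is correct, and its second stage (reducing the finite mixture to $\max_n \overline{F}_{S_n}(xy)/\overline{F}_{S_n}(x)$ via the elementary inequality $\sum a_n/\sum b_n \leqslant \max a_n/b_n$, then using finiteness of the index set) is exactly what the paper does. Where you diverge is in establishing $F_{S_n}\in\mathcal{C}$ for each fixed $n$, which is the paper's Lemma~\ref{l4}. You run a one-variable-at-a-time induction, which forces you to upgrade $\overline{F}_{\xi_n}(x)=o(\overline{F}_{\xi_1}(x))$ to $\overline{F}_{\xi_n}(x)=o(\overline{F}_{S_{n-1}}(x))$; your lower bound $\overline{F}_{S_{n-1}}(x)\geqslant \overline{F}_{\xi_1}(x+M)\,\mathbb{P}(\xi_2+\cdots+\xi_{n-1}>-M)$ combined with $\mathcal{C}\subset\mathcal{L}$ and letting $M\to\infty$ does this correctly. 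The paper avoids that extra step by a single grouping: it collects all indices $k$ with $F_{\xi_k}\notin\mathcal{C}$ into one set $\mathcal{K}$, applies Lemma~\ref{l2} once to $\widehat{\varSigma}_n=\xi_1+\sum_{k\in\mathcal{K}}\xi_k$ with reference d.f. $F_{\xi_1}$ (so $b_1=1$ and $b_k=0$ for $k\in\mathcal{K}$), concludes $\overline{F}_{\widehat{\varSigma}_n}\sim\overline{F}_{\xi_1}$ and hence $F_{\widehat{\varSigma}_n}\in\mathcal{C}$, and then convolves the remaining $\mathcal{C}$-distributed terms in via Lemma~\ref{l1} and inequality \eqref{amen}. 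Your facts (i) and (ii) are genuinely available from the paper's Lemmas~\ref{l1} and~\ref{l2} (the latter needing $\mathcal{C}\subset\mathcal{S}$ so that the reference d.f. is subexponential), so nothing is missing; the paper's grouping is slightly more economical because every comparison is made against the single tail $\overline{F}_{\xi_1}$ rather than against the growing convolution tail, while your induction is more modular and makes the convolution-closure properties of $\mathcal{C}$ explicit.
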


\begin{thm}\label{th2}
Let $\{\xi_1,\xi_2,\ldots\}$ be independent real-valued r.v.s, and
$\eta$ be a counting r.v. independent of $\{\xi_1,\xi_2,\ldots\}$. Then
the d.f. $F_{S_\eta}$ belongs to the class $\mathcal{C}$ if the
following conditions are satisfied:
\begin{enumerate}[label={\em(\alph*)}]
\item $F_{\xi_{1}} \in\mathcal{C}$,
\item for each  $k \geqslant2$, either $F_{\xi _{k}}
    \in\mathcal{C}$ or $\overline{F}_{\xi_{k}}(x) \mathop {=} o
     ( \overline{F}_{\xi_{1}}(x)  )$,
\item $\displaystyle \limsup_{x\to\infty} \sup_{n \geqslant1}
    \frac{1}{n\overline{F}_{\xi_1}(x)}
    \sum_{i=1}^{n}\overline{F}_{\xi_i}(x)<\infty$,
\item $\mathbb{E} \eta^{p+1} < \infty$ for some $p >
    J^{+}_{F_{\xi_1}}$.
\end{enumerate}
\end{thm}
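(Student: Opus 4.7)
The plan is to establish
\begin{align*}
\limsup_{y\uparrow 1} \limsup_{x \to \infty} \frac{\overline{F}_{S_\eta}(xy)}{\overline{F}_{S_\eta}(x)} \leqslant 1,
\end{align*}
since the reverse inequality is immediate from the monotonicity of $\overline{F}$. I would decompose $\overline{F}_{S_\eta}(xy) = \Sigma_1(N,x,y) + \Sigma_2(N,x,y)$ at a threshold $N\in \mathbb{N}$, where
\begin{align*}
\Sigma_1(N,x,y) &:= \sum_{n=1}^{N}\mathbb{P}(\eta = n)\overline{F}_{S_n}(xy),\\
\Sigma_2(N,x,y) &:= \sum_{n>N}\mathbb{P}(\eta = n)\overline{F}_{S_n}(xy),
\end{align*}
and treat the two pieces by distinct mechanisms: Theorem~\ref{th1} for the head $\Sigma_1$ and a Matuszewska-index estimate for the tail $\Sigma_2$.

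The key analytical obstacle is an auxiliary uniform-in-$n$ Kesten-type bound
\begin{align*}
\overline{F}_{S_n}(x) \leqslant K\sum_{i=1}^{n}\overline{F}_{\xi_i}(x),\qquad n \in \mathbb{N},\; x \geqslant x_0,
\end{align*}
with $K$ and $x_0$ independent of $n$. Combined with condition~(c) this yields $\overline{F}_{S_n}(x) \leqslant C\, n\, \overline{F}_{\xi_1}(x)$ uniformly for $x$ large. Since the $\xi_i$ are real-valued and no mean assumption is available, I would derive the bound by first passing to the positive parts $\xi_i^+ := \xi_i\vee 0$ (so that the right tail of $S_n$ is dominated by that of $\sum_i \xi_i^+$) and then adapting the techniques of \cite{ds-2016} and \cite{dss-2016} to the non-negative, not necessarily identically distributed case, exploiting $F_{\xi_1}\in \mathcal{C}\subset \mathcal{D}$.

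Granted this bound, the head is handled by applying Theorem~\ref{th1} to the counting variable $\widetilde{\eta}_N$ defined as $\eta$ conditioned on $\eta \leqslant N$: its support is finite, and conditions~(a) and~(b) of the present theorem supply the remaining hypotheses of Theorem~\ref{th1}, so $F_{S_{\widetilde{\eta}_N}} \in \mathcal{C}$ and hence
\begin{align*}
\lim_{y\uparrow 1}\limsup_{x\to \infty} \frac{\Sigma_1(N,x,y)}{\Sigma_1(N,x,1)} = 1.
\end{align*}
For the tail I would invoke the standard Matuszewska inequality: for any $p > J^{+}_{F_{\xi_1}}$ there exist constants $C_1,x_1$ such that $\overline{F}_{\xi_1}(xy) \leqslant C_1 y^{-p}\overline{F}_{\xi_1}(x)$ for all $x\geqslant x_1$ and $y\in(0,1]$. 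Combined with the uniform bound this gives
\begin{align*}
\Sigma_2(N,x,y) \leqslant C C_1\, y^{-p}\,\overline{F}_{\xi_1}(x)\sum_{n>N} n\,\mathbb{P}(\eta = n),
\end{align*}
while a matching lower bound $\overline{F}_{S_\eta}(x)\geqslant c\,\overline{F}_{\xi_1}(x)$ for $x$ large follows by fixing any $n_0\in{\rm supp}(\eta)$ and using the long-tailedness inclusion $\mathcal{C}\subset \mathcal{L}$ on the event $\{\xi_1 > x+M,\; \xi_i > -M/(n_0-1),\, i=2,\ldots,n_0\}$. Since $\mathbb{E}\eta^{p+1}<\infty$ forces $\mathbb{E}\eta<\infty$, the series $\sum_{n>N} n\,\mathbb{P}(\eta=n)$ vanishes as $N\to\infty$. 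Taking limits in the order $x\to\infty$, then $y\uparrow 1$, then $N\to\infty$ assembles the pieces to yield the required inequality.
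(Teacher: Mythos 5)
Your overall architecture (reduce to $\limsup_{y\uparrow1}\limsup_{x\to\infty}\overline F_{S_\eta}(xy)/\overline F_{S_\eta}(x)\leqslant1$, split at a finite threshold, handle the head via the finite-support case and the tail via a uniform bound plus a moment condition on $\eta$) matches the paper's, and both your treatment of $\Sigma_1$ and your lower bound $\overline F_{S_\eta}(x)\geqslant c\,\overline F_{\xi_1}(x)$ are fine. The genuine gap is the key analytical step: the claimed uniform Kesten-type bound
\[
\overline F_{S_n}(x)\ \leqslant\ K\sum_{i=1}^{n}\overline F_{\xi_i}(x)\ \leqslant\ C\,n\,\overline F_{\xi_1}(x),
\qquad n\in\mathbb N,\ x\geqslant x_0,
\]
with $C$ and $x_0$ independent of $n$, is false in general. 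Take $\xi_i$ i.i.d.\ nonnegative with $\overline F_{\xi_1}(x)=x^{-2}$ for $x\geqslant1$ (so $F_{\xi_1}\in\mathcal C$ and $\mathbb E\xi_1=2$). At $x=n$ the law of large numbers gives $\overline F_{S_n}(n)\to1$, while $C\,n\,\overline F_{\xi_1}(n)=C/n\to0$; no such $C$ exists, and passing to positive parts does not help since the example is already nonnegative. A bound linear in $n$ on a fixed range $x\geqslant x_0$ cannot hold once $x$ falls below the bulk of $S_n$. This is exactly why the paper's Lemma \ref{l5} (imported from \cite{ds-2016}) only delivers $\overline F_{S_n}(x)\leqslant c_1 n^{p+1}\overline F_{\xi_1}(x)$ for $p>J^{+}_{F_{\xi_1}}$, valid for all $x\geqslant0$, and why condition (d) demands $\mathbb E\eta^{p+1}<\infty$ rather than merely $\mathbb E\eta<\infty$. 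A linear-in-$n$ estimate is available only on the restricted range $x\geqslant\gamma n$ and under centering and a condition on the means --- that is Lemma \ref{l6}, which the paper deploys in Theorem \ref{th3}, not here.

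The repair is to replace your linear bound by the $n^{p+1}$ bound of Lemma \ref{l5}: then
\[
\Sigma_2(N,x,y)\ \leqslant\ c_3\,\overline F_{\xi_1}(xy)\sum_{n>N}n^{p+1}\,\mathbb P(\eta=n),
\]
the factor $\overline F_{\xi_1}(xy)/\overline F_{\xi_1}(x)$ is controlled in the iterated limit by $F_{\xi_1}\in\mathcal C$ (you do not even need the explicit Matuszewska--Potter inequality), and the tail of the series vanishes as $N\to\infty$ by condition (d). With that substitution your argument coincides with the paper's proof.
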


When $\{\xi_1,\xi_2,\ldots\}$ are identically distributed with common
d.f. $F_{\xi} \in\mathcal{C}$, conditions (a), (b), and (c) of Theorem
\ref{th2} are satisfied obviously. Hence, we have the following corollary.

\begin{cor}[See also Theorem 3.4 in \cite{chen}]\label{c1}
Let $\{\xi_1,\xi_2,\ldots\}$ be i.i.d. real-valued r.v.s with d.f.
$F_{\xi} \in\mathcal{C}$, and $\eta$ be a counting r.v. independent of
$\{\xi_1,\xi_2,\ldots\}$. Then the d.f. $F_{S_\eta}$ belongs to the
class $\mathcal{C}$ if \,$\mathbb{E} \eta^{p+1} < \infty$ for some $p >
J^{+}_{F_{\xi}}$.
\end{cor}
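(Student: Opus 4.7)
The plan is to deduce Corollary \ref{c1} directly from Theorem \ref{th2} by checking that all four hypotheses (a)--(d) of that theorem are fulfilled under the i.i.d.\ assumption with $F_\xi \in \mathcal{C}$ and $\mathbb{E}\eta^{p+1}<\infty$ for some $p > J^{+}_{F_\xi}$.

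First, since $\xi_1$ has distribution $F_\xi \in \mathcal{C}$, hypothesis (a) of Theorem \ref{th2} holds verbatim. Next, because the $\xi_k$ are i.i.d., $F_{\xi_k}=F_{\xi_1}=F_\xi \in \mathcal{C}$ for every $k\geqslant 2$, so the first alternative in hypothesis (b) is satisfied trivially (the ``little-$o$'' alternative is not needed). For hypothesis (c), the identical-distribution assumption gives
\[
\frac{1}{n\,\overline{F}_{\xi_1}(x)}\sum_{i=1}^{n}\overline{F}_{\xi_i}(x) = 1
\]
for every $n\geqslant 1$ and every $x$ such that $\overline{F}_{\xi_1}(x)>0$; hence the $\limsup$--$\sup$ equals $1<\infty$. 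Finally, since $F_{\xi_1}=F_\xi$, the Matuszewska indices coincide, $J^{+}_{F_{\xi_1}} = J^{+}_{F_\xi}$, so the moment assumption $\mathbb{E}\eta^{p+1}<\infty$ for some $p>J^{+}_{F_\xi}$ is exactly hypothesis (d).

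With all four conditions of Theorem \ref{th2} verified, the conclusion $F_{S_\eta}\in \mathcal{C}$ follows immediately. There is no real obstacle in this argument: the corollary is essentially the i.i.d.\ specialization of Theorem \ref{th2}, and the only observation that requires any thought is that the averaging condition (c) collapses to the identity $1=1$ in the i.i.d.\ setting.
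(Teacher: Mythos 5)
Your proposal is correct and is exactly the argument the paper uses: the corollary is stated as the immediate i.i.d.\ specialization of Theorem \ref{th2}, with conditions (a), (b), (c) holding trivially (the averaged tail ratio in (c) being identically $1$) and (d) being the assumed moment condition on $\eta$.
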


\begin{thm}\label{th3}
Let $\{\xi_1,\xi_2,\ldots\}$ be independent nonnegative r.v.s, and
$\eta$ be a~counting r.v. independent of $\{\xi_1,\xi_2,\ldots\}$. Then
the d.f. $F_{S_\eta}$ belongs to the class $\mathcal{C}$ if the
following conditions are satisfied:
\begin{enumerate}[label={\em(\alph*)}]
\item $F_{\xi_{1}} \in\mathcal{C}$,
\item for each $k \geqslant2$, either $F_{\xi _{k}} \in\mathcal{C}$
    or $\overline{F}_{\xi_{k}}(x) \mathop {=} o  (
    \overline{F}_{\xi_{1}}(x)  )$,
\item $\mathbb{E}\xi_{1} < \infty$,
\item $\overline{F}_{\eta}(x) = o  ( \overline{F}_{\xi_{1}}(x)
     )$,
\item $\displaystyle\limsup_{x\to\infty} \sup_{n\geqslant1}
    \frac{1}{n\overline{F}_{\xi_1}(x)}
    \sum_{i=1}^{n}\overline{F}_{\xi_i}(x)<\infty$,
\item $\displaystyle\limsup_{u\to\infty} \sup_{n\geqslant1}\frac
    {1}{n} \sum_{\substack{k=1 \\ \mathbb{E} \xi_{k} \geqslant
    u}}^{n} \mathbb{E}\xi_k = 0$.
\end{enumerate}
\end{thm}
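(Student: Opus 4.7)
The goal is to establish
\[
\lim_{y\uparrow 1}\,\limsup_{x\to\infty}\frac{\overline{F}_{S_\eta}(xy)}{\overline{F}_{S_\eta}(x)}=1.
\]
The strategy is to sandwich $\overline{F}_{S_\eta}$ between two asymptotically equivalent expressions and then transfer the class $\mathcal{C}$ from $\overline{F}_{\xi_1}$. Concretely, I will show
\[
\overline{F}_{S_\eta}(x)\sim g(x):=\sum_{i=1}^{\infty}\mathbb{P}(\eta\geqslant i)\,\overline{F}_{\xi_i}(x)\qquad\text{as }x\to\infty,
\]
and verify that $g$ itself lies in $\mathcal{C}$. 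Two preliminaries are useful: since $\mathcal{C}\subset\mathcal{D}$, condition (c) yields $\int_0^\infty\overline{F}_{\xi_1}(x)\,\mathrm{d}x<\infty$, which together with (d) gives $\mathbb{E}\eta<\infty$; and nonnegativity of the summands yields $\overline{F}_{S_\eta}(x)\geqslant\mathbb{P}(\eta\geqslant 1)\,\overline{F}_{\xi_1}(x)$, so any $o(\overline{F}_{\xi_1}(x))$ term is automatically $o(\overline{F}_{S_\eta}(x))$.

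\emph{Step 1 (asymptotic $\overline{F}_{S_\eta}\sim g$).} Fix $\epsilon>0$ and choose $N=N(\epsilon)$ so large that $\mathbb{E}[\eta\,\ind_{\{\eta>N\}}]<\epsilon$. For each fixed $n\leqslant N$, Theorem~\ref{th1} (applied with deterministic counting variable equal to $n$) gives $F_{S_n}\in\mathcal{C}$, and the single-big-jump principle for subexponential tails (valid because $\mathcal{C}\subset\mathcal{S}$ and (b) provides the required tail comparability) produces $\overline{F}_{S_n}(x)\sim\sum_{i=1}^n\overline{F}_{\xi_i}(x)$; since only finitely many $n$ are involved, this is uniform for $n\leqslant N$. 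For the tail $n>N$, an auxiliary estimate from Section~\ref{lemma}, analogous to those developed in \cite{ds-2016,dss-2016}, furnishes the uniform linear bound $\overline{F}_{S_n}(x)\leqslant K\sum_{i=1}^n\overline{F}_{\xi_i}(x)$ for every $n$ and large $x$; combined with (e) this yields $\overline{F}_{S_n}(x)\leqslant K'n\,\overline{F}_{\xi_1}(x)$, whence
\[
\sum_{n>N}\mathbb{P}(\eta=n)\,\overline{F}_{S_n}(x)\leqslant K'\,\overline{F}_{\xi_1}(x)\,\mathbb{E}[\eta\,\ind_{\{\eta>N\}}]\leqslant K'\epsilon\,\overline{F}_{\xi_1}(x).
\]
A matching lower bound comes from $\overline{F}_{S_n}(x)\geqslant\mathbb{P}(\max_{i\leqslant n}\xi_i>x)\geqslant\sum_{i=1}^n\overline{F}_{\xi_i}(x)-\sum_{i<j\leqslant n}\overline{F}_{\xi_i}(x)\,\overline{F}_{\xi_j}(x)$, the double sum being of smaller order for fixed $n$ because $\overline{F}_{\xi_1}(x)\to 0$. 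Assembling and letting $\epsilon\downarrow 0$ yields $\overline{F}_{S_\eta}(x)\sim g(x)$.

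\emph{Step 2 ($g\in\mathcal{C}$).} Fix $\epsilon>0$ and split $\mathbb{N}=A\cup B$ with $A=\{i:F_{\xi_i}\in\mathcal{C}\}$ and $B=\{i:\overline{F}_{\xi_i}=o(\overline{F}_{\xi_1})\}$; (a) and (b) make this a valid partition. Pick $I=I(\epsilon)$ with $\mathbb{E}[\eta\,\ind_{\{\eta>I\}}]<\epsilon$. For the finitely many $i\leqslant I$ with $i\in A$, the individual $\mathcal{C}$-estimates are made simultaneous by taking $y$ in a common left neighbourhood of $1$. For $i\leqslant I$ with $i\in B$, the $o$-relation combined with the $\mathcal{D}$-property of $\overline{F}_{\xi_1}$ makes these finitely many terms $o(\overline{F}_{\xi_1}(x))$. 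For the tail $i>I$, condition (e) gives
\[
\sum_{i>I}\mathbb{P}(\eta\geqslant i)\,\overline{F}_{\xi_i}(xy)\leqslant K\,\overline{F}_{\xi_1}(xy)\,\mathbb{E}[\eta\,\ind_{\{\eta>I\}}]\leqslant K''\epsilon\,\overline{F}_{\xi_1}(x).
\]
Summing the three contributions and dividing by $g(x)\geqslant\mathbb{P}(\eta\geqslant 1)\,\overline{F}_{\xi_1}(x)$ gives $\limsup_{x\to\infty}g(xy)/g(x)\leqslant 1+C\epsilon$ for all $y$ sufficiently close to $1$; letting $\epsilon\downarrow 0$ confirms $g\in\mathcal{C}$.

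\emph{Conclusion and main obstacle.} Steps 1 and 2 combine to give
\[
\frac{\overline{F}_{S_\eta}(xy)}{\overline{F}_{S_\eta}(x)}=(1+o(1))\,\frac{g(xy)}{g(x)}\qquad(x\to\infty),
\]
so $\mathcal{C}$-membership of $g$ forces the same for $F_{S_\eta}$. The principal obstacle is the uniform-in-$n$ linear bound $\overline{F}_{S_n}(x)\leqslant K\sum_{i=1}^n\overline{F}_{\xi_i}(x)$ needed for the tail-of-$\eta$ estimate in Step 1. Its derivation calls on all of (b), (c), (e), (f), with (f)---the de la Vall\'ee Poussin-type uniform integrability of the means $\mathbb{E}\xi_k$---playing the decisive role in controlling the ``concentration'' event $\{S_n>x,\,\max_i\xi_i\leqslant x(1-\delta)\}$ in a manner insensitive to the relative size of $n$ and $x$.
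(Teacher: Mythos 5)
There is a genuine gap, and it sits exactly where you flag your ``main obstacle'': the uniform-in-$n$ linear bound $\overline{F}_{S_n}(x)\leqslant K\sum_{i=1}^n\overline{F}_{\xi_i}(x)$ for \emph{every} $n$ and all large $x$ is false under the hypotheses. Take $\xi_i$ i.i.d.\ Pareto with $\overline{F}_{\xi_1}(x)=x^{-2}$ and $n\approx x$: by the law of large numbers $\mathbb{P}(S_n>x)\to 1$, while $Kn\overline{F}_{\xi_1}(x)\approx Kx^{-1}\to 0$. The paper's corresponding tool (Lemma~\ref{l6}, a Fuk--Nagaev-type bound for the centred sums, which is where conditions (c), (e), (f) actually enter) only yields $\mathbb{P}(S_n>x)\leqslant c\,n\overline{F}_{\xi_1}(x)$ in the regime $x\geqslant\gamma n$, and that restriction cannot be removed. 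Consequently your Step~1 tail estimate $\sum_{n>N}\mathbb{P}(\eta=n)\overline{F}_{S_n}(x)\leqslant K'\overline{F}_{\xi_1}(x)\,\mathbb{E}[\eta\ind_{\{\eta>N\}}]$ does not follow, and with it the asymptotic $\overline{F}_{S_\eta}(x)\sim g(x)$ is unproved.

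The symptom of the gap is that in your argument condition (d) is used only to deduce $\mathbb{E}\eta<\infty$. That cannot be enough: Theorem~\ref{th2} shows that when $\eta$ is controlled only through its moments one needs $\mathbb{E}\eta^{p+1}<\infty$ for some $p>J^+_{F_{\xi_1}}$, so a first moment alone cannot close the argument. The role of (d) is to handle precisely the regime your linear bound misses. The paper splits
\[
\mathbb{P}(S_\eta>x)=\mathbb{P}(S_\eta>x,\ \eta\leqslant K)+\mathbb{P}\Bigl(S_\eta>x,\ K<\eta\leqslant\tfrac{x}{4c_6}\Bigr)+\mathbb{P}\Bigl(S_\eta>x,\ \eta>\tfrac{x}{4c_6}\Bigr),
\]
treats the middle range with Lemma~\ref{l6} (where $x\geqslant\gamma n$ holds by construction, after centring and using $\mathbb{E}S_n\leqslant c_6 n$ from (c) and (e)), and bounds the last term crudely by $\overline{F}_\eta(\frac{xy}{4c_6})$, which is $o(\overline{F}_{\xi_1}(x))$ by (d) and $F_{\xi_1}\in\mathcal{D}$. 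Your plan can be repaired by adopting the same three-way split: keep your fixed-$n$ analysis for $n\leqslant K$ (which is essentially the paper's $\mathcal{I}_1$ and is fine), use the restricted linear bound only for $K<n\leqslant x/(4c_6)$, and discharge $n>x/(4c_6)$ via (d) rather than via any per-$n$ estimate of $\mathbb{P}(S_n>x)$. Your Step~2 ($g\in\mathcal{C}$) is sound but becomes unnecessary once the comparison is made directly against the lower bound $\mathbb{P}(S_\eta>x)\geqslant\overline{F}_{\xi_1}(x)\mathbb{P}(\eta=a)$, as the paper does.
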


Similarly to Corollary \ref{c1}, we can formulate the following
statement. We note that, in the i.i.d. case, conditions (a), (b), (e),
and (f) of Theorem \ref{th3} are satisfied.

\begin{cor}\label{c2}
Let $\{\xi_1,\xi_2,\ldots\}$ be i.i.d. nonnegative r.v.s with common
d.f. $F_{\xi} \in\mathcal{C}$, and $\eta$ be a counting r.v.
independent of $\{\xi_1,\xi_2,\ldots\}$. Then the d.f. $F_{S_\eta}$
belongs to the class $\mathcal{C}$ under the following two conditions:
$\mathbb{E} \xi< \infty$ and $\overline{F}_{\eta}(x) \mathop{=} o (
\overline{F}_{\xi}(x)  )$.
\end{cor}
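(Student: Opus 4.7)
The plan is to derive Corollary~\ref{c2} as a direct consequence of Theorem~\ref{th3}: under the i.i.d.\ assumption together with the two extra hypotheses $\mathbb{E}\xi<\infty$ and $\overline{F}_\eta(x)=o(\overline{F}_\xi(x))$, I would simply check that all six conditions (a)--(f) of Theorem~\ref{th3} are satisfied, and then invoke that theorem.

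First I would note that conditions (a) and (b) are immediate: since $\xi_1,\xi_2,\ldots$ are identically distributed with common d.f.\ $F_\xi\in\mathcal{C}$, we have $F_{\xi_k}=F_\xi\in\mathcal{C}$ for every $k\geqslant 1$, and in particular $F_{\xi_1}\in\mathcal{C}$. Conditions (c) and (d) are exactly the two explicit hypotheses of the corollary, so there is nothing to verify there.

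The only slightly substantive checks are (e) and (f), but in the i.i.d.\ setting both collapse completely. For (e), since $\overline{F}_{\xi_i}(x)=\overline{F}_{\xi_1}(x)$ for all $i$, the ratio
\[
\frac{1}{n\overline{F}_{\xi_1}(x)}\sum_{i=1}^{n}\overline{F}_{\xi_i}(x)=1
\]
for every $n\geqslant 1$ and every $x$ with $\overline{F}_{\xi_1}(x)>0$, so the double limsup equals $1<\infty$. For (f), we have $\mathbb{E}\xi_k=\mathbb{E}\xi$ for all $k$, which is finite by condition (c); therefore, as soon as $u>\mathbb{E}\xi$, the indexing set $\{k\leqslant n:\mathbb{E}\xi_k\geqslant u\}$ is empty, so the inner sum vanishes for every $n$, and the limsup in $u$ is trivially $0$.

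With all six conditions verified, Theorem~\ref{th3} applies and yields $F_{S_\eta}\in\mathcal{C}$. I do not anticipate any real obstacle here — the proof is essentially a specialization of the main theorem to the i.i.d.\ case, and the only point worth highlighting is that conditions (e) and (f), which look somewhat technical in the inhomogeneous setting, both become automatic once the $\xi_k$ are identically distributed with a finite mean.
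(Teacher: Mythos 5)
Your proposal is correct and is exactly the paper's argument: the authors also obtain Corollary~\ref{c2} by noting that in the i.i.d.\ case conditions (a), (b), (e), and (f) of Theorem~\ref{th3} hold automatically, while (c) and (d) are the corollary's stated hypotheses. Your explicit verification of (e) and (f) fills in the small details the paper leaves to the reader.
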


Further in this section, we present two examples of r.v.s $\{\xi_1,\xi
_2,\ldots\}$ and $\eta$ for which the random sum $F_{S_\eta}$ has a
consistently varying tail.

\begin{ex}\label{ex1}
Let $\{\xi_1,\xi_2,\ldots\}$ be independent r.v.s such that $\xi _k$
are exponentially distributed for all even $k$, that is,
\begin{eqnarray*}
\overline{F}_{\xi_k}(x)={\rm e}^{-{x}},\quad x\geqslant0,\, k\in\{
2,4,6,\ldots\},
\end{eqnarray*}
whereas, for each odd $k$, $\xi_k$ is a copy of the r.v.
\begin{equation*}
(1+\mathcal{U})\,2^{\mathcal{G}},
\end{equation*}
where $\mathcal{U}$ and $\mathcal{G}$ are independent r.v.s, $\mathcal{U}$ is
uniformly distributed on the interval $[0,1]$, and $\mathcal{G}$ is
geometrically distributed with parameter $q\in(0,1)$, that is,
\begin{equation*}
\mathbb{P}(\mathcal{G}=l)=(1-q)\,q^l,\quad l=0,1,\ldots.
\end{equation*}
In addition, let $\eta$ be a counting r.v. independent of $\{\xi_1,\xi
_2,\ldots\}$ and distributed according to the Poisson law.
\end{ex}

Theorem \ref{th2} implies that the d.f. of the randomly stopped sum
$S_\eta$ belongs to the class $\mathcal{C}$ because:
\begin{enumerate}[label={(\alph*)}]
\item $F_{\xi_{1}} \in\mathcal{C}$ due to considerations in \xch{pp.}{pages}
    122--123 of \cite{cai-tang},
\item $F_{\xi_{k}} \in\mathcal{C}$  for $k\in\{3,5,\ldots \}$, and
    $\overline{F}_{\xi_{k}}(x) \mathop{=} o  (
    \overline{F}_{\xi_{1}}(x)  )$ for $k\in\{2,4,6,\ldots\}$,
\item $\displaystyle \limsup_{x\to\infty} \sup_{n\geqslant1}\frac
    {1}{n\overline{F}_{\xi_1}(x)}\sum_{i=1}^{n} \overline{F}_{\xi
    _i}(x) \leqslant1$,
\item all moments of the r.v. $\eta$ are finite.
\end{enumerate}

Note that $\xi_1$ does not satisfy condition (c) of Theorem \ref{th3}
in the case $q\geqslant1/2$. Hence, Example \ref{ex1} describes the
situation where Theorem \ref{th2} should be used instead of Theorem \ref{th3}.

\begin{ex}\label{ex2}
Let $\{\xi_1,\xi_2,\ldots\}$ be independent r.v.s such that $\xi _k$
are distributed according to the Pareto law (with tail index
$\alpha=2$) for all odd $k$, and $\xi_k$ are exponentially distributed
(with parameter equal to $1$) for all even $k$, that is,
\begin{eqnarray*}
&&\overline{F}_{{\xi}_k}(x)=\frac{1}{x^2},\quad x\geqslant1,\ k\in\{
1,3,5,\ldots\},
\\
&&\overline{F}_{{\xi}_k}(x)={\rm e}^{-{x}},\quad x\geqslant0,\ k\in
\{ 2,4,6,\ldots\}.
\end{eqnarray*}
In addition, let $\eta$ be a counting r.v independent of $\{\xi_1,\xi
_2,\ldots\}$ that has the Zeta distribution with parameter $4$, that is,
\[
\mathbb{P}(\eta=m)=\frac{1}{\zeta(4)}\frac{1}{(m+1)^4}, \quad m\in
\mathbb{N}_0,
\]
where $\zeta$ denotes the Riemann zeta function.
\end{ex}

Theorem \ref{th3} implies that the d.f.\ of the randomly stopped sum
$S_{\eta}$ belongs to the class $\mathcal{C}$ because:
\begin{enumerate}[label={(\alph*)}]
\item $F_{\xi_{1}} \in\mathcal{C}$,
\item $F_{\xi_{k}} \in\mathcal{C}$ for $k\in\{3,5,\ldots \}$, and $
    \overline{F}_{\xi_{k}}(x) \mathop{=} o  (
    \overline{F}_{\xi_{1}}(x)  )$ for $k\in\{2,4,6,\ldots\}$,
\item $\mathbb{E}\xi_{1} = 2$,
\item $\overline{F}_{\eta}(x) = o  ( \overline{F}_{\xi_{1}}(x)
     )$,
\item $\displaystyle\limsup_{x\to\infty} \sup_{n\geqslant1}\frac
    {1}{n\overline{F}_{\xi_1}(x)}\sum_{i=1}^{n} \overline{F}_{\xi
    _i}(x) \leqslant1$,
\item $\displaystyle\max_{k\in\mathbb{N}}\mathbb{E}\xi_k=2$.
\end{enumerate}

Regarding condition (d), it should be noted that the Zeta distribution
with parameter 4 is a discrete version of Pareto distribution with tail
index 3.

Note that $\eta$ does not satisfy the condition (d) of Theorem \ref
{th2} because $J^{+}_{F_{\xi_1}} = 2$ and $\mathbb{E} \eta^3 = \infty$.
Hence, Example \ref{ex2} describes the situation where Theorem \ref
{th3} should be used instead of Theorem \ref{th2}.

\section{Auxiliary lemmas}
\label{lemma}

This section deals with several auxiliary lemmas. The first lemma is
Theorem~3.1 in \cite{chen} (see also Theorem 2.1 in \cite{wang+tang}).

\begin{lemma}\label{l1}
Let $\{X_1,X_2, \ldots X_n\}$ be independent real-valued r.v.s. If
${F}_{X_{k}} \in\mathcal{C}$ for each $k\in\{1,2,\ldots,n \}$, then
\[
\mathbb{P} \Biggl(\,\sum_{i=1}^{n}X_i>x
\Biggr)\sim\sum_{i=1}^{n}
\overline{F}_{X_i}(x).
\]
\end{lemma}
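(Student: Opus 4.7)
The plan is to prove Lemma \ref{l1} by induction on $n$. The base case $n=1$ is trivial, so suppose the statement holds for $n-1$ independent consistently varying summands. Writing $S=X_1+\cdots+X_{n-1}$, $Y=X_n$, and $T(x):=\sum_{i=1}^{n-1}\overline{F}_{X_i}(x)$, the inductive hypothesis gives $\overline{F}_S(x)\sim T(x)$, and the goal becomes to show
\[
\mathbb{P}(S+Y>x)\sim T(x)+\overline{F}_Y(x).
\]
Before attacking the main estimate I would record two preliminary facts about the tail of $S$ transferred from the $X_i$'s via the asymptotic $\overline{F}_S\sim T$. Since each $F_{X_i}\in\mathcal{C}\subset\mathcal{L}\cap\mathcal{D}$, the finite sum $T$ is long-tailed and dominatedly varying, so $\overline{F}_S(x\pm M)\sim\overline{F}_S(x)$ for any fixed $M>0$ and $\overline{F}_S(vx)=O(\overline{F}_S(x))$ for any fixed $v\in(0,1)$. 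Taking the maximum over finitely many $i$ of the consistent-variation limits also yields $\lim_{v\uparrow1}\limsup_{x\to\infty}T(vx)/T(x)=1$.

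For the upper bound, for any $v\in(0,1/2)$ I would use the inclusion
\[
\{S+Y>x\}\subset\{S>(1-v)x\}\cup\{Y>(1-v)x\}\cup\{S>vx,\ Y>vx\},
\]
so that by independence
\[
\mathbb{P}(S+Y>x)\leq\mathbb{P}(S>(1-v)x)+\overline{F}_Y((1-v)x)+\mathbb{P}(S>vx)\,\overline{F}_Y(vx).
\]
The cross term is $O(\overline{F}_Y(x)\,T(x))$ by dominated variation and is therefore $o(T(x)+\overline{F}_Y(x))$ because $\overline{F}_Y(x)\to 0$. Dividing by $T(x)+\overline{F}_Y(x)$, taking $\limsup_{x\to\infty}$ and then $v\downarrow 0$, the consistent-variation property of each $\overline{F}_{X_i}$ and of $\overline{F}_Y$ collapses the bound to $1$.

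For the lower bound, for fixed $M>0$ the events $A_1=\{S>x+M,\ Y>-M\}$ and $A_2=\{Y>x+M,\ S>-M\}$ both force $S+Y>x$, and their intersection sits inside $\{S>x+M,\ Y>x+M\}$, whose probability is $\overline{F}_S(x+M)\overline{F}_Y(x+M)=o(T(x)+\overline{F}_Y(x))$. Inclusion--exclusion combined with independence gives
\[
\mathbb{P}(S+Y>x)\geq\overline{F}_S(x+M)\,\mathbb{P}(Y>-M)+\overline{F}_Y(x+M)\,\mathbb{P}(S>-M)-\overline{F}_S(x+M)\overline{F}_Y(x+M).
\]
Dividing by $T(x)+\overline{F}_Y(x)$, taking $\liminf_{x\to\infty}$ using long-tailedness of both $\overline{F}_S$ and $\overline{F}_Y$, and finally letting $M\to\infty$ yields $\liminf\geq 1$. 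Combining the two bounds completes the induction.

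The main obstacle I foresee is verifying that the partial sum $S_{n-1}$ inherits enough of the $\mathcal{C}$-type regularity to drive the induction; one does not need $F_{S_{n-1}}\in\mathcal{C}$ in the full sense, but one needs the long-tailedness, dominated variation, and the $v\uparrow 1$ collapse stated above. Each of these follows from the inductive asymptotic $\overline{F}_{S_{n-1}}\sim T$ combined with the corresponding property of the individual $\overline{F}_{X_i}$, provided one is careful that the implicit constants behave uniformly across the finitely many summands.
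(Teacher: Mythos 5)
Your argument is correct. Note, however, that the paper does not prove this lemma at all: it is quoted as Theorem~3.1 of Chen and Yuen (2009) (see also Theorem~2.1 of Wang and Tang (2006)), where a more general version for pairwise quasi-asymptotically independent summands is established. Your proof is the standard self-contained route for the independent case: induction on $n$, with the inductive step being the two-summand max-sum equivalence. The key checks all go through: the transfer of long-tailedness, dominated variation, and the $v\uparrow 1$ collapse from the individual $\overline{F}_{X_i}$ to $T$ and hence to $\overline{F}_S$ follows from the max-of-ratios inequality $(\sum a_i)/(\sum b_i)\leqslant\max_i a_i/b_i$ over the finitely many summands (the same inequality \eqref{amen} the paper uses elsewhere); the inclusion $\{S+Y>x\}\subset\{S>(1-v)x\}\cup\{Y>(1-v)x\}\cup\{S>vx,\,Y>vx\}$ is valid for $v\in(0,1/2)$; the cross term $\mathbb{P}(S>vx)\overline{F}_Y(vx)=O(T(x)\overline{F}_Y(x))=o(T(x))$ is negligible; and the lower bound via $A_1\cup A_2$ with $\mathbb{P}(Y>-M),\mathbb{P}(S>-M)\to1$ as $M\to\infty$ is sound. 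Compared with invoking Chen--Yuen, your proof buys self-containedness at the cost of generality (it uses full independence, not just pairwise quasi-asymptotic independence), which is all that this paper needs.
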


The following statement about nonnegative  subexponential distributions was proved
in Proposition 1 of \cite{egv-1979} and later generalized to a wider
distribution class in Corollary 3.19 of \cite{FKZ-2011}.

\begin{lemma}\label{l2}
Let $\{X_1, X_2,\ldots X_n\}$ be independent real-valued r.v.s. Assume
that\break $\overline{F}_{X_{i}}/
\overline{F}(x)\mathop{\rightarrow}\limits_{x\rightarrow\infty}b_i$ for some
subexponential d.f. $F$ and some constants
$b_i\geqslant0$,
$i\in\{1,2,\ldots n\}$. Then
\[
\frac{\overline{F_{X_1}*F_{X_2}*\cdots*F_{X_n}}(x)}{
\overline{F}(x)}\mathop{\rightarrow} _{x\rightarrow\infty}\sum
_{i=1}^nb_i.
\]
\end{lemma}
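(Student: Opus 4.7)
The plan is to prove the lemma by induction on $n$. The case $n=1$ is the hypothesis. For the induction step, set $Y=X_1+\cdots+X_{n-1}$ and $Z=X_n$; the induction hypothesis yields $\overline{F}_Y(x)\sim(b_1+\cdots+b_{n-1})\,\overline{F}(x)$, so the full statement reduces to the two-variable version of itself: if $Y$ and $Z$ are independent real-valued r.v.s with $\overline{F}_Y(x)\sim b\,\overline{F}(x)$ and $\overline{F}_Z(x)\sim c\,\overline{F}(x)$ for a subexponential $F$, then $\overline{F_Y*F_Z}(x)\sim(b+c)\,\overline{F}(x)$.

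For the two-variable version, I would start from the convolution identity
$$\overline{F_Y*F_Z}(x)=\int_{-\infty}^{\infty}\overline{F}_Z(x-y)\,dF_Y(y),$$
and decompose the range of integration into $(-\infty,-A]$, $(-A,A]$, and $(A,\infty)$ for a large constant $A>0$. Since every subexponential d.f.\ (in the paper's convention, using the modification $F(x)\ind_{[0,\infty)}(x)$) is long-tailed, $\overline{F}(x-y)/\overline{F}(x)\to1$ uniformly in $|y|\le A$, so the middle piece contributes $c\,\mathbb{P}(|Y|\le A)\,\overline{F}(x)(1+o(1))$. The piece $(-\infty,-A]$ is dominated by $\overline{F}_Z(x)\,\mathbb{P}(Y<-A)=O(\overline{F}(x))\,\mathbb{P}(Y<-A)$, which becomes negligible after sending $A\to\infty$ following $x\to\infty$. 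The substantive piece is $(A,\infty)$: by a symmetric re-decomposition (swapping the roles of $Y$ and $Z$) together with the classical subexponential convolution bound applied to $F$ itself, one shows this contributes $b\,\overline{F}(x)(1+o(1))$. Summing the three pieces yields $(b+c)\,\overline{F}(x)(1+o(1))$, as required.

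The main technical obstacle is the boundary case in which some $b_i$ equals zero. When $c=0$, $F_Z$ is not tail-equivalent to a subexponential distribution, so one cannot directly invoke the classical convolution-closure of $\mathcal{S}$ for $F_Y$ and $F_Z$. I would resolve this by a truncation argument: write $Z=(Z\wedge M)+(Z-M)_+$ for a large fixed $M$; the bounded part only affects the long-tailed slice $|y|\le A$ treated above, while $\mathbb{P}((Z-M)_+>\varepsilon x)\le\overline{F}_Z(M+\varepsilon x)=o(\overline{F}(x))$ by the assumption $c=0$ combined with the long-tailedness of $F$, so the unbounded part is absorbed into the $o(\overline{F}(x))$ error. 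The case $b=0$ is symmetric. This truncation--plus--long-tailedness maneuver is precisely the extension that Corollary~3.19 of \cite{FKZ-2011} makes over the earlier Proposition~1 of \cite{egv-1979}, where the assumption $b_i>0$ for every $i$ was in force; I would expect the detailed verification of that absorption step (with uniform-in-$x$ control as $A,M\to\infty$) to be the most delicate part of writing out a full proof.
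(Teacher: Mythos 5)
The paper does not prove Lemma~\ref{l2} at all: it imports the statement from Proposition~1 of \cite{egv-1979} and Corollary~3.19 of \cite{FKZ-2011}, so there is no in-paper argument to compare against; what follows measures your sketch against the standard proof in those references. Your architecture --- induction down to two summands, then the convolution identity split at $\pm A$, with long-tailedness of $F$ handling $(-A,A]$ and the crude bound $\overline{F}_Z(x+A)\,\mathbb{P}(Y\leqslant -A)$ handling $(-\infty,-A]$ --- is exactly the classical one, and those two pieces (together with the matching lower bound from $\{Y>x+A,\,Z>-A\}$) are fine. The gap is the piece over $(A,\infty)$, which is the \emph{only} place subexponentiality enters and which you dispose of in one clause. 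The obstacle is that the ``classical subexponential convolution bound'' is a statement about $\overline{F*F}$, i.e.\ about the tail $\overline{F}$ integrated against the measure $dF$, whereas your integral $\int_A^\infty\overline{F}_Z(x-y)\,dF_Y(y)$ involves the measure $dF_Y$; tail equivalence $\overline{F}_Y\sim b\,\overline{F}$ gives no control of $dF_Y$ by $dF$. To close this you must split again at $x-A$, bound $\int_{x-A}^\infty$ by $\overline{F}_Y(x-A)\sim b\,\overline{F}(x)$, replace $\overline{F}_Z(x-y)$ by $C\,\overline{F}(x-y)$ on $(A,x-A]$ with $C=\sup_{t\geqslant A}\overline{F}_Z(t)/\overline{F}(t)$, and then integrate by parts to trade $dF_Y$ for $dF$ (picking up boundary terms of order $\overline{F}(x)\overline{F}_Y(A)$), finally invoking the characterization $\lim_{A\to\infty}\limsup_{x\to\infty}\overline{F}(x)^{-1}\int_A^{x-A}\overline{F}(x-y)\,dF(y)=0$ of $F\in\mathcal{S}$. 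That chain is where the entire content of the lemma sits, and your write-up asserts its conclusion rather than supplying it.

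Two smaller points. Since your scheme only ever uses subexponentiality of $F$ itself --- never of $F_Y$ or $F_Z$ --- the boundary cases $b=0$ or $c=0$ cause no difficulty: the estimate $\overline{F}_Z(t)\leqslant\varepsilon\,\overline{F}(t)$ for $t\geqslant A$ is then available and only helps, so the truncation $Z=(Z\wedge M)+(Z-M)_+$ is an unnecessary detour. And the attribution is off: Proposition~1 of \cite{egv-1979} already allows $q_i=0$ (positivity is needed only to conclude that the convolution is itself subexponential); what Corollary~3.19 of \cite{FKZ-2011} adds, and what the present paper actually relies on, is the passage from nonnegative to real-valued summands.
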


In the next lemma, we show in which cases the convolution
$F_{X_1}*F_{X_2}*\cdots*F_{X_n}$ belongs to the class $\mathcal{C}$.

\begin{lemma}\label{l4}
Let $\{X_1,X_2,\ldots,X_n\}, n \in\mathbb{N}$, be independent
real-valued r.v.s. Then the d.f. $F_{\varSigma_n}$ of the sum
$\varSigma _n=X_1+X_2+\cdots+ X_n$ belongs to the class $\mathcal{C}$
if the following conditions are satisfied:
\begin{enumerate}[label={\em(\alph*)}]
\item $F_{X_{1}} \in\mathcal{C}$,
\item \xch{for}{For} each $k=2,\ldots,n$, either $F_{X_{k}}
    \in\mathcal{C}$ or $\overline{F}_{X_{k}}(x) \mathop {=} o  (
    \overline{F}_{X_{1}}(x)  )$.
\end{enumerate}
\end{lemma}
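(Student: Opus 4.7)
The plan is to partition the indices into two groups based on which alternative of condition (b) they satisfy, handle each group separately, and then glue them together using Lemma \ref{l2}. Let $I = \{k \in \{1,\dots,n\}: F_{X_k} \in \mathcal{C}\}$ (so $1 \in I$ by condition (a)) and $J = \{1,\dots,n\} \setminus I$, on which $\overline{F}_{X_k}(x) = o(\overline{F}_{X_1}(x))$. Write $\varSigma' = \sum_{k \in I} X_k$ and $\varSigma'' = \sum_{k \in J} X_k$, so that $\varSigma_n = \varSigma' + \varSigma''$ is a sum of two independent r.v.s.

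First I would show that $F_{\varSigma'} \in \mathcal{C}$. Since each $F_{X_k}$, $k \in I$, lies in $\mathcal{C} \subset \mathcal{S}$, Lemma \ref{l1} gives $\overline{F}_{\varSigma'}(x) \sim \sum_{k \in I} \overline{F}_{X_k}(x)$. Using the elementary mediant inequality $(\sum a_k)/(\sum b_k) \leqslant \max_k a_k/b_k$ for positive terms, one obtains
\begin{equation*}
\limsup_{x\to\infty} \frac{\overline{F}_{\varSigma'}(xy)}{\overline{F}_{\varSigma'}(x)} \leqslant \max_{k \in I} \limsup_{x\to\infty} \frac{\overline{F}_{X_k}(xy)}{\overline{F}_{X_k}(x)},
\end{equation*}
and letting $y \uparrow 1$ each term on the right tends to $1$ (finite max commutes with the limit), while the reverse inequality $\geqslant 1$ is trivial for $y < 1$. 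Hence $F_{\varSigma'} \in \mathcal{C}$.

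Next I would dominate the tail of $\varSigma''$. A union bound gives $\overline{F}_{\varSigma''}(x) \leqslant |J| \max_{k \in J} \overline{F}_{X_k}(x/|J|)$. Condition (b) forces the right-hand side to be $o(\overline{F}_{X_1}(x/|J|))$, and since $F_{X_1} \in \mathcal{C} \subset \mathcal{D}$, the ratio $\overline{F}_{X_1}(x/|J|)/\overline{F}_{X_1}(x)$ is bounded. Thus $\overline{F}_{\varSigma''}(x) = o(\overline{F}_{X_1}(x))$, and since $\overline{F}_{\varSigma'}(x) \gtrsim \overline{F}_{X_1}(x)$ (the $k=1$ summand in the asymptotic from Lemma \ref{l1}), it follows that $\overline{F}_{\varSigma''}(x)/\overline{F}_{\varSigma'}(x) \to 0$.

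Finally I would apply Lemma \ref{l2} with reference distribution $F = F_{\varSigma'} \in \mathcal{C} \subset \mathcal{S}$ to the two independent r.v.s $\varSigma'$ and $\varSigma''$, whose tail ratios to $\overline{F}$ converge to $1$ and $0$ respectively. This yields $\overline{F}_{\varSigma_n}(x) = \overline{F_{\varSigma'} * F_{\varSigma''}}(x) \sim \overline{F}_{\varSigma'}(x)$, and because membership in $\mathcal{C}$ is preserved under tail-equivalence, we conclude $F_{\varSigma_n} \in \mathcal{C}$. The main obstacle I anticipate is Step 2 on $\varSigma''$: we have no structural information about the $F_{X_k}$ with $k \in J$, only a tail domination, so one must rely on the crude union bound combined with the $\mathcal{D}$-property of $F_{X_1}$ to compensate for the loss of a factor $|J|$ in the argument.
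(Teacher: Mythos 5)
Your proof is correct, and it rests on the same two pillars as the paper's (Lemma \ref{l1} plus the mediant inequality for the all-$\mathcal{C}$ part, Lemma \ref{l2} for absorbing the negligible tails), but the decomposition is reversed. The paper first forms $\widehat{\varSigma}_n=X_1+\sum_{k\in\mathcal K}X_k$, feeding $X_1$ and the individual ``small'' variables into Lemma \ref{l2} with reference d.f.\ $F_{X_1}\in\mathcal C\subset\mathcal S$ to get $\overline F_{\widehat{\varSigma}_n}\sim\overline F_{X_1}$, and only then adds the remaining $\mathcal C$-variables via the all-$\mathcal C$ case. You instead sum the $\mathcal C$-variables first to get $\varSigma'$ with $F_{\varSigma'}\in\mathcal C$, and then must control the tail of the block $\varSigma''$ as a whole; this costs you an extra step --- the union bound $\overline F_{\varSigma''}(x)\leqslant\sum_{k\in J}\overline F_{X_k}(x/|J|)$ combined with the $\mathcal D$-property of $F_{X_1}$ --- which is valid (for real-valued summands too: if every $X_k\leqslant x/|J|$ then $\varSigma''\leqslant x$) but is avoidable. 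You could have skipped it entirely by applying Lemma \ref{l2} to the $|J|+1$ independent variables $\varSigma', X_{k_1},\ldots,X_{k_{|J|}}$ with reference d.f.\ $F_{\varSigma'}$ and constants $b=1,0,\ldots,0$, since $\overline F_{X_k}(x)=o(\overline F_{X_1}(x))=o(\overline F_{\varSigma'}(x))$ for $k\in J$; that is essentially what the paper does, just anchored at $F_{X_1}$ rather than $F_{\varSigma'}$. Net effect: both arguments are sound, yours trades the paper's second invocation of the all-$\mathcal C$ case for a crude tail bound on $\varSigma''$.
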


\begin{proof}
Evidently, we can suppose that $n\geqslant2$. We split our proof into
two parts.

\textit{First part.} Suppose that $F_{X_k}\in\mathcal{C}$ for
all $k\in \{1,2,\ldots,n\}$. In such a case, the lemma follows from
Lemma \ref {l1} and the inequality
\begin{equation}
\label{amen} \frac{a_1+a_2+\cdots+a_m}{b_1+b_2+\cdots+b_m}\leqslant\max \biggl\{\frac
{a_1}{b_1},
\frac{a_2}{b_2},\ldots, \frac{a_m}{b_m} \biggr\}
\end{equation}
for $a_i\geqslant0$ and $b_i>0$, $i=1,2,\ldots, m$.

Namely, using the relation of Lemma \ref{l1} and estimate \eqref{amen},
we get that
\begin{align*}
\limsup_{x\rightarrow\infty}\frac{\overline{F}_{\varSigma_n}(xy)}{\overline{F}_{\varSigma_n}(x)} &{}=\limsup
_{x\rightarrow\infty} \frac{\sum_{k=1}^n\overline{F}_{X_k}(xy)}{\sum_{k=1}^n\overline{F}_{X_k}(x)}
\\
&{}\leqslant \max_{1\leqslant k\leqslant n} \limsup_{x\rightarrow\infty}
\frac{\overline{F}_{X_k}(xy)}{\overline{F}_{X_k}(x)}
\end{align*}
for arbitrary $y\in(0,1)$.

Since $F_{X_k}\in\mathcal{C}$ for each $k$, the last estimate implies
that the d.f. $F_{\varSigma_n}$ has a~consistently varying tail, as desired.

\textit{Second part.} Now suppose that
$F_{X_k}\notin\mathcal{C}$ for some of indexes $k\in\{2,3,\ldots, n\}$.
By the conditions of the lemma we have that $\overline{F}_{X_k}(x)= o (
\overline{F}_{X_1}(x) )$ for such $k$. Let
$\mathcal{K}\subset\{2,3,\ldots, n\}$ be the subset of indexes \(k\)
such that
\[
F_{X_k}\notin\mathcal{C}\quad \mbox{and} \quad \overline{F}_{X_k}(x)=
o \bigl( \overline{F}_{X_1}(x) \bigr).
\]
By Lemma \ref{l2},
\[
\overline{F}_{\widehat{\varSigma}_n}(x)\sim\overline{F}_{X_1}(x),
\]
where
\[
\widehat{\varSigma}_n=X_1+\sum
_{k\in\mathcal{K}}X_k.
\]
Hence,
\begin{equation}
\label{amen2} \limsup_{x\rightarrow\infty}\frac{\overline{F}_{\widehat{\varSigma
}_n}(xy)}{\overline{F}_{\widehat{\varSigma}_n}(x)} =\limsup
_{x\rightarrow\infty} \frac{\overline
{F}_{X_1}(xy)}{\overline{F}_{X_1}(x)}
\end{equation}
for every $y\in(0,1)$.

Equality \eqref{amen2} implies immediately that the d.f. $F_{\widehat
{\varSigma}_n}$ belongs to the class $\mathcal{C}$. Therefore, the d.f.
$F_{\varSigma_n}$ also belongs to the class $\mathcal{C}$ according to the
first part of the proof because
\[
\varSigma_n=\widehat{\varSigma}_n+\sum
_{k\notin\mathcal{K}}X_k
\]
and $F_{X_k}\in\mathcal{C}$ for each $k\notin\mathcal{K}$. The lemma is proved.
\end{proof}

The following two statements about dominatedly varying distributions
are\break Lemma~3.2 and Lemma 3.3 in \cite{ds-2016}. Since any consistently
varying distribution is also dominatingly varying, these statements
will be useful in the proofs of our main results concerning the class
$\mathcal{C}$.

\begin{lemma}\label{l5}
Let $\{X_1,X_2,\ldots\}$ be independent real-valued r.v.s, and
$F_{X_{\nu}} \in\mathcal{D}$ for some $\nu\geqslant1$. Suppose, in
addition, that
\begin{equation*}
\limsup_{x\to\infty} \sup_{n\geqslant\nu}\frac
{1}{n\overline{F}_{X_{\nu}}(x)}
\sum_{i=1}^{n} \overline
{F}_{X_i}(x)<\infty.
\end{equation*}
Then, for each $p > J^{+}_{F_{X_{\nu}}}$, there exists a positive
constant $c_1$ such that
\begin{equation}
\overline{F}_{S_{n}}(x) \leqslant c_1 n^{p+1}
\overline{F}_{X_\nu}(x)
\end{equation}
for all $n \geqslant\nu$ and $x \geqslant0$.
\end{lemma}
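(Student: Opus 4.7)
The plan is to combine a deterministic union bound on $\{S_n>x\}$ with a Potter-type inequality coming from $J^+_{F_{X_\nu}}<\infty$, and then absorb the small-$x$ regime using a lower bound on $\overline{F}_{X_\nu}$ extracted from the same Matuszewska information. Since $S_n\leq x$ whenever every $X_i\leq x/n$, we have $\{S_n>x\}\subseteq \bigcup_{i=1}^n\{X_i>x/n\}$, hence
\[
\overline{F}_{S_n}(x)\leq \sum_{i=1}^n \overline{F}_{X_i}(x/n).
\]
The hypothesis of the lemma provides constants $x_1>0$ and $K_2>0$ with $\sum_{i=1}^n \overline{F}_{X_i}(y)\leq K_2\, n\,\overline{F}_{X_\nu}(y)$ for all $n\geq\nu$ and $y\geq x_1$. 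Substituting $y=x/n$, which is legal for $x\geq n x_1$, gives $\overline{F}_{S_n}(x)\leq K_2\, n\,\overline{F}_{X_\nu}(x/n)$.

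Next, since $p>J^+_{F_{X_\nu}}$, a standard consequence of the definition of the upper Matuszewska index yields $K,X_0>0$ such that $\overline{F}_{X_\nu}(u)\leq K(v/u)^p\,\overline{F}_{X_\nu}(v)$ for all $X_0\leq u\leq v$. With $u=x/n$ and $v=x$ this gives $\overline{F}_{X_\nu}(x/n)\leq K n^p\,\overline{F}_{X_\nu}(x)$ for $x\geq n X_0$. Setting $x_*:=\max\{x_1,X_0\}$ and chaining with the previous display produces
\[
\overline{F}_{S_n}(x)\leq K_2 K\, n^{p+1}\,\overline{F}_{X_\nu}(x), \qquad n\geq\nu,\ x\geq n x_*.
\]

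For the residual range $0\leq x < n x_*$ only the trivial bound $\overline{F}_{S_n}(x)\leq 1$ is available. Applying the same Matuszewska inequality with $u=x_*$ and $v=n x_*$ (legal because $x_*\geq X_0$) gives $\overline{F}_{X_\nu}(n x_*)\geq K^{-1}n^{-p}\,\overline{F}_{X_\nu}(x_*)=:c\,n^{-p}$. Since $\overline{F}_{X_\nu}$ is non-increasing, this lower bound propagates throughout $[0,n x_*]$, and hence $n^{p+1}\,\overline{F}_{X_\nu}(x)\geq cn\geq c\nu$ on that interval. Taking $c_1:=\max\{K_2 K,\,(c\nu)^{-1}\}$ assembles the two ranges into a single uniform inequality, valid for all $n\geq\nu$ and $x\geq 0$.

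The hard part is not the Potter calculation itself but the interplay between the large-$x$ and small-$x$ regimes: one has to extract a uniform lower bound on $\overline{F}_{X_\nu}$ from the very same Matuszewska inequality that supplied the Potter-type upper bound, and arrange the thresholds $x_1,X_0$ so that a single constant $c_1$ works simultaneously for all admissible $n$ and all $x\geq 0$.
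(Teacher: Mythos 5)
Your proof is correct. A point of comparison: the paper does not prove Lemma~\ref{l5} at all --- it is quoted from Lemma~3.2 of \cite{ds-2016}, and the only argument supplied in the text is the one-line reduction from real-valued to nonnegative summands via $X_k^+$. Your write-up is therefore a self-contained substitute for the external citation, and it is the standard route for such bounds: the union bound $\{S_n>x\}\subseteq\bigcup_{i=1}^n\{X_i>x/n\}$ (which, as a bonus, works directly for real-valued $X_i$, making the positive-part reduction unnecessary), followed by the uniformity hypothesis at the point $x/n$, the Potter-type inequality $\overline{F}_{X_\nu}(u)\leqslant K(v/u)^p\,\overline{F}_{X_\nu}(v)$ for $v\geqslant u\geqslant X_0$ valid for every $p>J^+_{F_{X_\nu}}$, and the absorption of the range $0\leqslant x<nx_*$ via the trivial bound $\overline{F}_{S_n}(x)\leqslant 1$ against the lower bound $\overline{F}_{X_\nu}(x)\geqslant K^{-1}n^{-p}\overline{F}_{X_\nu}(x_*)$. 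The exponent $n^{p+1}$ comes out exactly as claimed ($n$ from the sum, $n^p$ from Potter), and the thresholds are matched consistently. The only step you use silently is that $\overline{F}_{X_\nu}(x_*)>0$, which is guaranteed because $\mathcal{D}\subset\mathcal{H}$ forces the tail of $F_{X_\nu}$ to be positive everywhere; with that observation the constant $c=K^{-1}\overline{F}_{X_\nu}(x_*)$ is indeed positive and the final $c_1=\max\{K_2K,(c\nu)^{-1}\}$ works for all $n\geqslant\nu$ and $x\geqslant 0$.
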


In fact, Lemma \ref{l5} is proved in \cite{ds-2016} for nonnegative
r.v.s. However, the lemma remains valid for real-valued r.v.s. To see
this, it suffices to observe that
$
\mathbb{P}(X_1+X_2+\cdots+X_n>x)\leqslant\mathbb{P}(X_1^++X_2^+\cdots+X_n^+>x)$
and $ \mathbb{P}(X_k>x)=\mathbb{P}(X_k^+>x) $, where $n\in\mathbb{N}$,
$k\in\{1,2,\ldots,n\}$, $x\geqslant0$, and $a^+$ denotes the positive
part of $a$.

\begin{lemma}\label{l6}
Let $\{X_1,X_2,\ldots\}$ be independent real-valued r.v.s, and
$F_{X_\nu}\in\mathcal{D}$ for some $\nu\geqslant1$. Let, in addition,
\begin{eqnarray*}
\label{d1} &&\lim_{u\rightarrow\infty}\sup_{n\geqslant\nu}
\frac
{1}{n}\sum_{k=1}^{n}\mathbb{E} \bigl(|X_k| \ind_{\{X_k\leqslant-u\}}\bigr)=0,
\\
&& \limsup_{x\to\infty} \sup_{n\geqslant\nu}
\frac
{1}{n\overline{F}_{X_{\nu}}(x)}\sum_{i=1}^{n} \overline
{F}_{X_i}(x)<\infty,
\end{eqnarray*}
and $\mathbb{E}X_k=\mathbb{E}X_k^+-\mathbb{E}X_k^-=0$ for $k\in\mathbb
{N}$. Then, for each $\gamma>0$, there exists a~positive constant
$c_2=c_2(\gamma)$ such that
\[
\mathbb{P}(S_n>x)\leqslant c_2 n\overline{F}_{X_\nu}(x)
\]
for all $x\geqslant\gamma n$ and all $n\geqslant\nu$.
\end{lemma}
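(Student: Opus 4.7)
The plan is a double truncation: first truncate each $X_k$ from below at level $-u$ (to neutralise the left tail using the mean-zero and left-tail hypotheses), then truncate the resulting centred variables from above at level $x$ (to isolate the ``one big jump'' contribution). The big-jump term is bounded by the density hypothesis; the remaining bounded centred sum is estimated by a Fuk--Nagaev- or Rosenthal-type moment inequality combined with the Matuszewska-index characterisation of $\mathcal{D}$.

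Fix $\gamma>0$ and $u=u(\gamma)$, to be chosen. Write $X_k=X_k'+X_k''$ with $X_k'=X_k\ind_{\{X_k>-u\}}$ and $X_k''\leqslant 0$, so that $S_n\leqslant S_n':=\sum_{k=1}^n X_k'$. Since $\mathbb{E}X_k=0$, one has $\mathbb{E}X_k'=\mathbb{E}(|X_k|\ind_{\{X_k\leqslant -u\}})\geqslant 0$, and the left-tail hypothesis lets us pick $u$ so that $\sum_{k=1}^n\mathbb{E}X_k'\leqslant \gamma n/4$ uniformly in $n\geqslant\nu$. Hence, on $\{x\geqslant\gamma n\}$,
\begin{equation*}
\mathbb{P}(S_n>x)\leqslant\mathbb{P}(S_n'>x)\leqslant\mathbb{P}\bigl(S_n'-\mathbb{E}S_n'>3x/4\bigr).
\end{equation*}

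Writing $Y_k=X_k'-\mathbb{E}X_k'$ (independent, centred, bounded below by $-u-C$) and splitting according to whether some $Y_k$ exceeds $x$, we obtain
\begin{equation*}
\mathbb{P}\Biggl(\sum_{k=1}^n Y_k>\frac{3x}{4}\Biggr)\leqslant\sum_{k=1}^n\mathbb{P}(Y_k>x)+\mathbb{P}\Biggl(\sum_{k=1}^n\widehat{Y}_k>\frac{3x}{4}\Biggr),
\end{equation*}
where $\widehat{Y}_k=Y_k\ind_{\{Y_k\leqslant x\}}$. Because $\mathbb{E}X_k'\geqslant 0$, the inclusion $\{Y_k>x\}\subseteq\{X_k>x\}$ holds; the density hypothesis then gives $\sum_k\mathbb{P}(Y_k>x)\leqslant C_1 n\overline{F}_{X_\nu}(x)$. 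For the bounded centred sum we fix $p>\max\{2,J^+_{F_{X_\nu}}\}$, centre the $\widehat{Y}_k$ (whose mean is non-positive) and apply Markov with a Rosenthal- or Fuk--Nagaev-type inequality; the resulting upper bound is a linear combination of $x^{-p}\sum_k\mathbb{E}|\widehat{Y}_k|^p$ and $x^{-p}\bigl(\sum_k\mathbb{E}\widehat{Y}_k^2\bigr)^{p/2}$.

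The main technical obstacle is showing that each of these Rosenthal contributions is $O(n\,\overline{F}_{X_\nu}(x))$ on the range $x\geqslant\gamma n$, rather than the cruder polynomial-in-$n$ estimate of Lemma~\ref{l5}. This rests on three ingredients: the density hypothesis, used to replace $\sum_k\overline{F}_{X_k}$ by $n\overline{F}_{X_\nu}$ inside the truncated moment integrals; the Matuszewska-index identity $\int_0^x t^{p-1}\overline{F}_{X_\nu}(t)\,dt=O\bigl(x^p\overline{F}_{X_\nu}(x)\bigr)$ valid for $p>J^+_{F_{X_\nu}}$; and (to handle the variance contribution when $J^+_{F_{X_\nu}}\geqslant 2$) the decay $x\overline{F}_{X_\nu}(x)\to 0$ forced by $\mathbb{E}|X_\nu|<\infty$, itself a consequence of the mean-zero hypothesis.
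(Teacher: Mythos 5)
First, a point of reference: the paper does not prove this lemma at all --- it is quoted verbatim as Lemma~3.3 of \cite{ds-2016}, so there is no in-paper argument to compare against. Your overall skeleton (truncate the left tails at $-u$, use the mean-zero assumption together with the uniform-integrability condition to push the mean of the truncated sum below $\gamma n/4\leqslant x/4$, then a one-big-jump decomposition with the big-jump term controlled by the condition $\sum_{i\leqslant n}\overline{F}_{X_i}(x)\leqslant Cn\overline{F}_{X_\nu}(x)$, and finally the Matuszewska-index bounds $x^{-p}=O(\overline{F}_{X_\nu}(x))$ and $\int_0^x t^{p-1}\overline{F}_{X_\nu}(t)\,dt=O(x^p\overline{F}_{X_\nu}(x))$ for $p>J^+_{F_{X_\nu}}$) is indeed the standard route for results of this type and is consistent with the cited source. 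Two steps, however, are not justified as written.

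First, the assertion that the $Y_k$ are ``bounded below by $-u-C$'' does not follow from the hypotheses: the condition controls only the Ces\`aro averages $\frac{1}{n}\sum_{k\leqslant n}\mathbb{E}(|X_k|\ind_{\{X_k\leqslant-u\}})$, so the individual shifts $\mathbb{E}X_k'=\mathbb{E}(|X_k|\ind_{\{X_k\leqslant-u\}})$ need not be uniformly bounded in $k$ (they can be of order $k$ along a sparse subsequence while the averages stay small). Since these shifts feed into $\sum_k\mathbb{E}|\widehat{Y}_k|^p$ and $\sum_k\mathbb{E}\widehat{Y}_k^2$, you must either bound them honestly or restructure the truncation. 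Second, and more seriously, the variance contribution is exactly where the lemma is hard, and the ingredients you list do not close it. The identity $\int_0^x t\,\overline{F}_{X_\nu}(t)\,dt=O\bigl(x^2\overline{F}_{X_\nu}(x)\bigr)$ is only available when $2>J^+_{F_{X_\nu}}$; when $J^+_{F_{X_\nu}}\geqslant2$ your fallback $x\overline{F}_{X_\nu}(x)\to0$ gives only $\sum_k\mathbb{E}\widehat{Y}_k^2=o(nx)$, hence $x^{-p}\bigl(\sum_k\mathbb{E}\widehat{Y}_k^2\bigr)^{p/2}=o(1)\cdot(n/x)^{p/2}=o(1)$ uniformly on $x\geqslant\gamma n$ --- but $o(1)$ is not $O\bigl(n\overline{F}_{X_\nu}(x)\bigr)$, since the right-hand side can decay polynomially in $x$ while the left-hand side decays arbitrarily slowly. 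Making the remainder genuinely smaller than $n\overline{F}_{X_\nu}(x)$ \emph{uniformly over} $\nu\leqslant n\leqslant x/\gamma$ requires more: either truncation at level $vx$ with $v$ small and an exponential Fuk--Nagaev/Bennett bound whose remainder carries an exponent $\asymp 1/v$ chosen larger than $p$, combined with the $\mathcal{D}$-property $\overline{F}_{X_\nu}(vx)\leqslant C_v\overline{F}_{X_\nu}(x)$ to compare the truncated second moment back to $\overline{F}_{X_\nu}(x)$, or a direct appeal to the known precise-large-deviation upper bounds for $\mathcal{D}$-class summands. As it stands, the step you flag as ``the main technical obstacle'' is the actual content of the lemma and remains open in your write-up.
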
\vfill

\section{Proofs of the main results}\label{proof}

\begin{proof}[Proof of Theorem \ref{th1}]
It suffices to prove that
\begin{equation}
\label{th1.1} \limsup_{y\uparrow1}\limsup_{x\to\infty}
\frac{\overline{F}_{S_{\eta
}}(xy)}{\overline{F}_{S_{\eta}}(x)} \leqslant1.
\end{equation}
According to estimate \eqref{amen}, for $x > 0$ and $y \in(0,1)$, we have
\begin{equation*}
\begin{aligned} \frac{\overline{F}_{S_{\eta}}(xy)}{\overline{F}_{S_{\eta}}(x)} & = \frac{\sum_{\substack{n=1 \\ n \in{\rm supp}(\eta)}}^{D}
\mathbb{P} (S_n>xy ) \mathbb{P} (\eta=n )}{\sum_{\substack{n=1 \\ n \in{\rm supp}(\eta)}}^{D} \mathbb{P}
(S_n>x ) \mathbb{P} (\eta=n )} \leqslant
\max_{\substack
{1 \leqslant n \leqslant D \\ n \in{\rm supp}(\eta)}} \frac{\mathbb
{P} (S_n>xy )}{\mathbb{P} (S_n>x )}. \end{aligned} %
\end{equation*}
Hence, by Lemma \ref{l4},
\begin{equation*}
\begin{aligned} \limsup_{y\uparrow1}\limsup
_{x\to\infty} \frac{\overline{F}_{S_{\eta
}}(xy)}{\overline{F}_{S_{\eta}}(x)} & \leqslant\limsup
_{y\uparrow1}\limsup_{x\to\infty} \max_{\substack{1
\leqslant n \leqslant D \\ n \in{\rm supp}(\eta)}}
\frac{\overline
{F}_{S_{n}}(xy)}{\overline{F}_{S_{n}}(x)}
\\
& \leqslant\max_{\substack{1 \leqslant n \leqslant D \\ n \in{\rm
supp}(\eta)}} \limsup_{y\uparrow1}\limsup
_{x\to\infty} \frac{\overline
{F}_{S_{n}}(xy)}{\overline{F}_{S_{n}}(x)} = 1, \end{aligned} %
\end{equation*}
which implies the desired estimate \eqref{th1.1}. The theorem is
proved.
\end{proof}

\begin{proof}[Proof of Theorem \ref{th2}]
As in Theorem \ref{th1}, it
suffices to prove inequality \eqref{th1.1}.
For all $K \in\mathbb{N}$ and $x>0$, we have
\begin{equation*}
\mathbb{P} (S_{\eta}>x )= \Biggl(\sum_{n=1}^{K}+
\sum_{n=K+1}^{\infty} \Biggr)\mathbb{P}
(S_{n}>x )\mathbb{P} (\eta=n ).
\end{equation*}
Therefore, for $x > 0$ and $y \in(0,1)$, we have
\begin{align}
\frac{\mathbb{P} (S_{\eta}>xy )}{\mathbb{P} (S_{\eta}>x )}
&{}= \frac{\sum_{n=1}^{K} \mathbb{P}(S_{n}>xy ) \mathbb{P} (\eta=n )}{\mathbb{P} (S_{\eta}>x )}\nonumber\\
&\quad{} +\ \frac{\sum_{n=K+1}^{\infty} \mathbb{P} (S_{n}>xy )\mathbb{P} (\eta=n )}{\mathbb{P} (S_{\eta}>x)}\nonumber\\
&{} =: \mathcal{J}_{1} + \mathcal{J}_{2}.\label{th2.1}
\end{align}

The random variable $\eta$ is not degenerate at zero, so there exists
$a \in\mathbb{N}$ such that $\mathbb{P} (\eta=a )>0$.
If $K \geqslant a$, then using inequality \eqref{amen}, we get
\begin{equation*}
\mathcal{J}_{1} \leqslant\frac{\sum_{\substack{n=1 \\ n \in
{\rm supp}(\eta)}}^{K} \mathbb{P} (S_n>xy ) \mathbb{P}
(\eta=n )}{\sum_{\substack{n=1 \\ n \in{\rm supp}(\eta
)}}^{K} \mathbb{P} (S_n>x ) \mathbb{P} (\eta=n )} \leqslant\max
_{\substack{1 \leqslant n \leqslant K \\ n \in{\rm
supp}(\eta)}}\frac{\mathbb{P} (S_n>xy )}{\mathbb{P}
(S_n>x )}.
\end{equation*}
Similarly as in the proof of Theorem \ref{th1}, it follows that
\begin{equation}
\label{th2.2} \limsup_{y\uparrow1}\limsup_{x\to\infty}
\mathcal{J}_{1} \leqslant\max_{\substack{1 \leqslant n \leqslant K \\ n \in{\rm supp}(\eta)}} \limsup
_{y\uparrow1}\limsup_{x\to\infty} \frac{\overline
{F}_{S_{n}}(xy)}{\overline{F}_{S_{n}}(x)} = 1.
\end{equation}
Since $\mathcal{C} \subset \mathcal{D}$, we can use Lemma \ref{l5} for
the numerator of $\mathcal{J}_{2}$ to obtain
\begin{equation*}
\sum_{n=K+1}^{\infty} \mathbb{P}
(S_{n}>xy ) \mathbb{P} (\eta=n ) \leqslant c_3
\overline{F}_{\xi_{1}}(xy)\sum_{n=K+1}^{\infty}n^{p+1}
\mathbb{P} (\eta=n )
\end{equation*}
with some positive constant $c_3$.
For the denominator of $\mathcal{J}_{2}$, we have that
\begin{align*}
\mathbb{P} (S_{\eta}>x )
&{}
=
\sum_{n=1}^{\infty}
\mathbb{P} (S_n>x )\mathbb {P}(\eta=n)
\nonumber
\\
&{}
\geqslant
\mathbb{P}(S_a>x) \mathbb{P} (\eta=a ).
\end{align*}

The conditions of the theorem imply that
\[
S_a=\xi_1+\sum_{k\in\mathcal{K}_a}
\xi_k+\sum_{k\notin
\mathcal{K}_a}\xi_k,
\]
where $\mathcal{K}_a= \{ k\in\{2,\ldots, a\} :F_{\xi_k}\notin\mathcal
{C}, \overline{F}_{\xi_k}(x)= o (\,\overline{F}_{\xi_1}(x) ) \}$.

By Lemma \ref{l2}
\[
\overline{F}_{\widehat{S}_a}(x)/\overline{F}_{\xi_1}(x)\mathop {
\rightarrow}_{x\rightarrow\infty}1,
\]
where ${F}_{\widehat{S}_a}$ is the d.f. of the sum
\[
\widehat{S}_a=\xi_1+\sum_{k\in\mathcal{K}_a}
\xi_k
\]
In addition, by Lemma \ref{l4} we have that the d.f. $F_{\widehat{S}_a}$
belongs to the class $\mathcal{C}$.

If $k\notin\mathcal{K}_a$, then $F_{\xi_k}\in\mathcal{C}$ by the
conditions of the theorem. This fact and Lemma~\ref{l1} imply that
\[
\liminf_{x\rightarrow\infty}\frac{\mathbb{P}(S_a>x)}{\overline
{F}_{\xi_1}(x)}\geqslant1+\sum
_{k\notin\mathcal{K}_a} \liminf_{x\rightarrow\infty}\frac{\overline{F}_{\xi_k}(x)}{\overline
{F}_{\xi_1}(x)}.
\]
Hence,
\begin{equation}
\label{th2.3} \mathbb{P}(S_\eta>x)\geqslant\frac{1}{2}
\overline{F}_{\xi_1}(x)\mathbb {P}(\eta=a)
\end{equation}
for $x$ sufficiently large. Therefore,
\begin{align}
&\limsup_{y\uparrow1}\limsup_{x\to\infty}\mathcal {J}_{2}\nonumber\\
&\quad{} \leqslant
\frac{2\,c_3}{\mathbb{P} (\eta=a )} \biggl(\limsup_{y\uparrow1}
\limsup_{x\to\infty}
\frac{\overline{F}_{\xi_{1}}(xy)}{\overline{F}_{\xi_{1}}(x)} \biggr)
\sum_{n=K+1}^{\infty}n^{p+1} \mathbb{P} (\eta=n ).\label{th2.4}
\end{align}
Estimates \eqref{th2.1}, \eqref{th2.2}, and \eqref{th2.4} imply that
\begin{equation*}
\limsup_{y\uparrow1}\limsup_{x\to\infty}
\frac{\mathbb{P} (S_{\eta
}>xy )}{\mathbb{P} (S_{\eta}>x )} \leqslant1 + \frac{2\,
c_3}{\mathbb{P} (\eta=a )}\mathbb{E} {\eta}^{p+1}
\ind_{\{\eta
>K\}}
\end{equation*}
for arbitrary $K \geqslant a$.

Letting $K$ tend to infinity, we get the desired estimate \eqref{th1.1}
due to condition (d). The theorem is proved.
\end{proof}

\begin{proof}[Proof of Theorem \ref{th3}]
Once again, it suffices to prove
inequality \eqref{th1.1}.

By condition (e) we have that there exist two positive constants $c_4$
and $c_5$ such that
\begin{equation*}
\sum_{i=1}^{n}\overline{F}_{\xi_{i}}(x)
\leqslant c_{5}n\overline {F}_{\xi_{1}}(x), \quad x \geqslant c_4,\ n \in\mathbb{N}.
\end{equation*}
Therefore,
\begin{equation}
\label{th3.1} \mathbb{E}S_n = \sum_{j=1}^{n}
\mathbb{E}\xi_{j} = \sum_{j=1}^{n}
\Biggl(\, \int_{0}^{c_4} + \int
_{c_4}^{\infty}\, \Biggr) \overline{F}_{\xi_{j}}(u)du
\leqslant c_4 n + c_5 n \mathbb{E} \xi_1 =:
c_6 n
\end{equation}
for a positive constant $c_6$ and all $n \in\mathbb{N}$.

If $K \in\mathbb{N}$ and $x > 4Kc_6$, then we have
\begin{align*}
\mathbb{P} ( S_{\eta} > x ) &{}= \mathbb{P} ( S_{\eta} > x,\eta\leqslant K )
\\
&\quad{} + \mathbb{P} \biggl( S_{\eta} > x, K < \eta\leqslant \frac{x}{4c_6} \biggr)
\\
&\quad{}+ \mathbb{P} \biggl( S_{\eta} > x, \eta> \frac{x}{4c_6} \biggr).
\end{align*}
Therefore,
\begin{align}
\frac
{\mathbb{P}  ( S_{\eta} > xy  )}
{\mathbb{P}  ( S_{\eta} > x  )}
&{} =
\frac
{\mathbb{P}  ( S_{\eta} > xy, \eta\leqslant K)}
{\mathbb{P}  ( S_{\eta} > x  )}
\nonumber
\\
&\quad{} +
\frac
{\mathbb{P}  \big( S_{\eta} > xy, K < \eta\leqslant\frac{xy}{4c_6}\big)}
{\mathbb{P}  ( S_{\eta} > x  )}
\nonumber
\\
&\quad{} +
\frac
{\mathbb{P}  \big( S_{\eta} > xy, \eta> \frac{xy}{4c_6} \big)}
{\mathbb{P}  ( S_{\eta} > x  )}
\nonumber
\\
&{} =: \mathcal{I}_1 + \mathcal{I}_2 + \mathcal{I}_3
\label{th3.2}
\end{align}
if $xy > 4Kc_6,\ x > 0$, and $y \in(0,1)$.

The random variable $\eta$ is not degenerate at zero, so $\mathbb
{P} (\eta=a )>0$ for some $a \in\mathbb{N}$.
If $K \geqslant a$, then
\begin{equation}
\label{th3.3} \limsup_{y\uparrow1}\limsup_{x\to\infty}
\mathcal{I}_{1} \leqslant1
\end{equation}
similarly to estimate \eqref{th2.2} in Theorem \ref{th2}.

For the numerator of $\mathcal{I}_{2}$, we have
\begin{align}
\mathcal{I}_{2,1} :={}&
\mathbb{P} \biggl(S_{\eta} > xy, K < \eta \leqslant\frac{xy}{4c_6} \biggr)
\nonumber
\\
={}& \sum_{K<n\leqslant\frac{xy}{4c_6} }\mathbb{P} \Biggl(\sum_{i=1}^{n}( \xi_i-\mathbb {E}\xi_i )> xy- \sum_{j=1}^{n}\mathbb{E}\xi_j \Biggr) \mathbb {P}(\eta=n)
\nonumber
\\
\leqslant{}&
\sum_{K<n\leqslant\frac{xy}{4 c_{6}}}\mathbb{P} \Biggl(\sum_{i=1}^{n} (\xi_i-\mathbb{E}\xi_i ) > \frac{3}{4} xy \Biggr) \mathbb{P}(\eta=n)
\label{th3.4}
\end{align}
by inequality \eqref{th3.1}.\goodbreak

The random variables $\xi_1-\mathbb{E}\xi_1$,
$\xi_2-\mathbb{E}\xi_2,\dots$ satisfy the conditions of Lemma~\ref{l6}.
Namely, $\mathbb {E}(\xi_k-\mathbb{E}\xi_k)=0$ for $k \in\mathbb{N}$
and $F_{\xi _1-\mathbb{E}\xi_1} \in\mathcal{C} \subset\mathcal{D}$
obviously. In addition,
\begin{align*}
&\limsup_{x\rightarrow\infty}\,\sup_{n\geqslant1}\frac{1}{n\,\mathbb{P}(\xi_1-\mathbb{E}\xi_1>x)}\sum_{k=1}^{n}\mathbb{P}(\xi_i-\mathbb{E}\xi_i>x)< \infty
\end{align*}
by conditions (a), (c) and (e). Finally,
\begin{align*}
&\limsup_{u\rightarrow\infty}\,\sup_{n\geqslant1}
\frac{1}{n}\sum_{k=1}^{n}\mathbb{E}(|\xi_k-\mathbb{E} \xi _k|\ind_{\{\xi_k-\mathbb{E}\xi_k \leqslant-u\}})
\\
&\quad{}=
\limsup_{u\rightarrow\infty}\,\sup_{n\geqslant 1}
\frac{1}{n}
\sum_{k=1}^{n}\mathbb{E}
\bigl((\mathbb{E} \xi_k-\xi_k)\ind_{\{\xi_k-\mathbb{E}\xi_k \leqslant-u\}}\bigr)
\\
&{}\leqslant
\limsup_{u\rightarrow\infty}\sup_{n\geqslant 1}
\frac{1}{n}\sum_{\substack{1\leqslant k\leqslant n \\ \mathbb{E}\xi_k\geqslant u }}\mathbb{E}
\xi_k=0
\end{align*}
because of condition (f). So,
applying the estimate of Lemma \ref{l6} to \eqref{th3.4}, we get
\begin{align*}
\mathcal{I}_{2,1}
&{}\leqslant c_7 \sum _{K<n\leqslant\frac{xy}{4 c_6}}n\overline{F}_{\xi_1} \biggl(
\frac{3}{4}xy+\mathbb{E}\xi_1 \biggr)\mathbb{P}(\eta=n)
\\
&{}\leqslant\ c_7 \overline{F}_{\xi_1} \Bigl(\,\frac{3}{4}xy \Bigr) \mathbb{E}\eta\ind_{\{\eta>K\}}
\end{align*}
with a positive constant $c_7$.
For the denominator of $\mathcal{I}_{2}$, we can use the inequality
\begin{align}
\mathbb{P} (S_{\eta}>x )
&{}=
\sum_{n=1}^{\infty} \mathbb{P} (S_n>x )\mathbb{P}(\eta=n)
\nonumber
\\
&{}\geqslant
\sum_{n=1}^{\infty}\mathbb{P} (\xi_1>x )\mathbb{P}(\eta=n)
\nonumber
\\
&{}\geqslant \overline{F}_{\xi_{1}}(x) \mathbb{P} (\eta=a )
\label{am1}
\end{align}
since the r.v.s $\{\xi_1,\xi_2,\ldots\}$ are nonnegative by assumption.
Hence,
\begin{equation*}
\mathcal{I}_{2} \leqslant\frac{c_7}{\mathbb{P}(\eta= a)} \mathbb {E}\eta
\ind_{\{\eta>K\}} \frac{\overline{F}_{\xi_1} \big(\frac
{3}{4}xy\big)}{\overline{F}_{\xi_1} (x )}.
\end{equation*}

If $y \in(1/2,1)$, then the last estimate implies that
\begin{equation}
\label{th3.5} \limsup_{x\rightarrow\infty} \mathcal{I}_{2}
\leqslant \frac
{c_7}{\mathbb{P}(\eta= a)} \mathbb{E}\eta\ind_{\{\eta>K\}} \limsup
_{x\rightarrow\infty}\ \frac{\overline{F}_{\xi_1} \big(\frac
{3}{8}x \big)}{\overline{F}_{\xi_1} (x )} \leqslant c_8 \mathbb{E}
\eta\ind_{\{\eta>K\}}
\end{equation}
with some positive constant $c_8$ because $F_{\xi_1} \in\mathcal{C}
\subset\mathcal{D}$.

Using inequality \eqref{am1} again, we obtain
\begin{equation*}
\mathcal{I}_{3} \leqslant\frac{\mathbb{P} \big(\eta> \frac{xy}{4c_6}
\big)}{\mathbb{P} ( S_{\eta} > x )} \leqslant
\frac{1}{\mathbb{P} ( \eta=
a )} \frac{\overline{F}_{\eta} \big(\frac{xy}{4c_6} \big)}{\overline
{F}_{\xi_1} \big(\frac{xy}{4c_6} \big)} \frac{\overline{F}_{\xi_1} \big(\frac{xy}{4c_6} \big)}{\overline{F}_{\xi_1} (x )}.
\end{equation*}
Therefore, for $y \in(1/2,1)$, we get
\begin{equation}
\label{th3.6} \limsup_{x\rightarrow\infty} \mathcal{I}_{3}
\leqslant \frac
{1}{\mathbb{P} ( \eta= a )} \limsup_{x\rightarrow\infty} \frac
{\overline{F}_{\eta} \big(\frac{xy}{4c_6} \big)}{\overline{F}_{\xi_1} \big(\frac{xy}{4c_6}\big)}
\limsup_{x\rightarrow\infty} \frac{\overline{F}_{\xi_1} \big(\frac{xy}{4c_6} \big)}{\overline
{F}_{\xi_1} (x )} = 0
\end{equation}
by condition (d).\goodbreak

Estimates \eqref{th3.2}, \eqref{th3.3}, \eqref{th3.5}, and \eqref
{th3.6} imply that
\begin{equation*}
\limsup_{y\uparrow1}\limsup_{x\to\infty}\frac{\mathbb
{P} (S_{\eta}>xy )}{\mathbb{P} (S_{\eta}>x
)}
\leqslant1 + c_8 \mathbb{E}\eta\ind_{ \lbrace\eta> K
\rbrace}
\end{equation*}
for $K \geqslant a$.

Letting $K$ tend to infinity, we get the desired estimate \eqref{th1.1}
because $\mathbb{E}\eta< \infty$ by conditions (c) and (d). The
theorem is proved.
\end{proof}




\end{document}